\providecommand{\R}{\mathbb{R}}
\providecommand{\Z}{\mathbb{Z}}
\DeclareMathOperator{\vol}{vol}
\newcommand{\paren}[1]{\left( #1 \right)}
\newcommand{\abs}[1]{\left\vert#1\right\vert}
\newcommand\blfootnote[1]{%
  \begingroup
  \renewcommand\thefootnote{}\footnote{#1}%
  \addtocounter{footnote}{-1}%
  \endgroup
}
\newcommand{\epsi}{\varepsilon}
\newtheorem{theorem}{Theorem}
\newtheorem{lemma}[theorem]{Lemma}
\newtheorem{conj}[theorem]{Conjecture}
\DeclareMathOperator{\rank}{rank}
\DeclareMathOperator{\CW}{CW}
\newcommand{\PH}{PH}
\begin{document}
\title{Topology and local geometry of the Eden model}


\author{Fedor Manin}
\address{\parbox{\linewidth}{Department of Mathematics\\ University of California, Santa Barbara\\ Santa Barbara, California, USA}}
\email{manin@math.ucsb.edu}
\thanks{First author supported in part by NSF DMS-2001042.}

\author{\'Erika Rold\'an}
\address{\parbox{\linewidth}{Fakult\"at f\"ur Mathematik\\ 
Technische Universit\"at M\"unchen\\ 
Garching b. M\"unchen, Germany}}
\email{erika.roldan@ma.tum.de}
\urladdr{\url{http://erikaroldan.net}}
\thanks{Second author supported in part by NSF-DMS \#1352386 and NSF-DMS \#1812028. This project received funding from the European Union's Horizon 2020 research and innovation program under the Marie Sk\l odowska-Curie grant agreement No.~754462.}

\author{Benjamin Schweinhart}
\address{\parbox{\linewidth}{Department of Mathematics \& Statistics \\ University at Albany \\ Albany, New York, USA}}
\email{bschweinhart@albany.edu}

\keywords{Eden model, first-passage percolation, stochastic topology, topological and geometric data analysis, polyominoes}
\blfootnote{2020 \textit{Mathematics Subject Classification.} 82B43, 62R40, 60K35, 55N31, 05B50.}

\begin{abstract}
   The Eden cell growth model is a simple discrete stochastic process which produces a ``blob'' in $\R^d$: start with one cube in the regular grid, and at each time step add a neighboring cube uniformly at random.  This process has been used as a model for the growth of aggregations, tumors, and bacterial colonies and the healing of wounds, among other natural processes. Here, we study the topology and local geometry of the resulting structure, establishing asymptotic bounds for Betti numbers. Our main result is that the Betti numbers grow at a rate between the conjectured rate of growth of the site perimeter and the actual rate of growth of the site perimeter.  We also present the results of computational experiments on finer aspects of the geometry and topology, such as persistent homology and the distribution of shapes of holes.
\end{abstract}
\maketitle

\section{Introduction}

In this paper, we apply the viewpoint of stochastic topology and topological and geometric data analysis to a discrete geometric model from probability theory: the $d$-dimensional Eden cell growth model (EGM). The 2-dimensional EGM was first introduced and simulated by Murray Eden \cite{eden1958probabilistic,eden1961two} as a model for the growth of colonies of non-motile bacteria on flat surfaces \cite{eden1998history}. It is defined on $\mathbb{R}^2$, using the regular square tessellation of the plane, as follows. Start at time one with one square tile at the origin. At each time step, add a new square tile selected uniformly from among all tiles adjacent to the structure but not yet contained in it (this set of tiles is called the \emph{site perimeter}). This process produces a shape that is well-approximated by a convex set but has interesting geometry at the boundary---see Figure \ref{fig:big}. Here we study the natural higher-dimensional generalization of the EGM to the regular cubical lattice in $\R^d$.

In the probability literature, the EGM is studied as an example of first-passage percolation \cite[Ch.~6]{auffinger201750}, a process which models the spread of a fluid or an infection in a nonhomogeneous medium.  This literature mainly focuses on the large-scale structure and statistics of this process, about which a fair amount is known; one of the most important results is the Cox--Durrett shape theorem \cite{cox1981some}, which shows that under mild assumptions, growth is generally ball-like, rather than fractal as one might initially expect.  That is, over time, the shape of the resulting structure looks more and more like a rescaling of a certain convex set which depends only on the model parameters---so, in the case of the Eden model, only on the dimension.\footnote{While this convex set looks round in simulations in dimension $d=2$, it is known not to be a Euclidean ball for $d>34$~\cite{kesten1986aspects,couronne2011construction}.}

\begin{figure}
    \centering
    
    \subfloat[]{\label{fig:Eden100K}\includegraphics[width=0.33\textwidth]{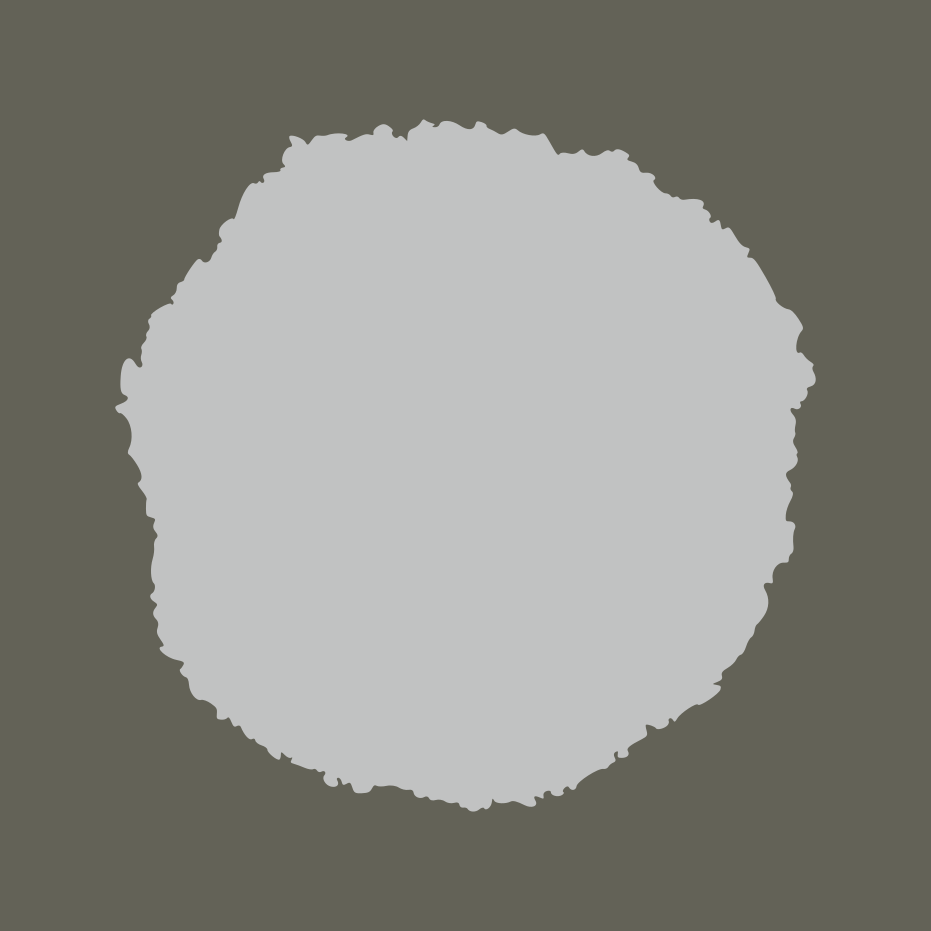}}
    \hspace{.1\textwidth}
    \subfloat[]{\label{fig:lichen}\includegraphics[width=0.33\textwidth]{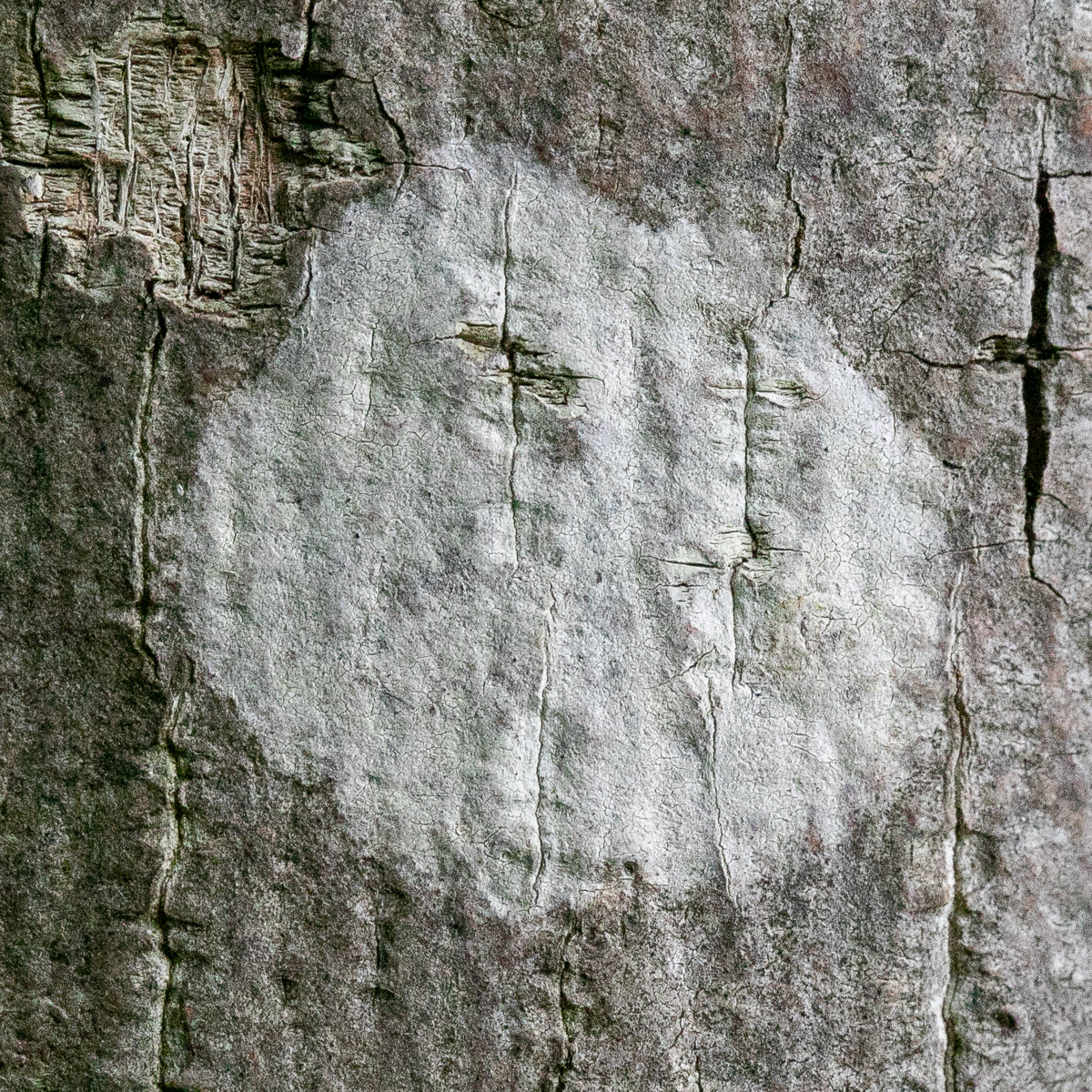}}
    
    \caption{(a) A simulation of the two-dimenional Eden model with 100,000 tiles shown in light gray and (b) the lichen \textit{Phlyctis argena}. Many natural processes result in ``Eden-like'' growth.
    }
    \label{fig:EdenLichen}

\end{figure}

The shape theorem restricts all ``random'' behavior to a collar near the boundary of this convex set which is vanishingly small compared to the whole structure, but whose thickness measured in tiles tends to infinity. As far as we know, not much previous attention has been dedicated to the local geometry in this region for any first-passage percolation model.  This local geometry naturally includes the topology: although holes of any arbitrarily large size eventually appear with probability 1 at the boundary of the Eden model, those holes and other nontrivial cycles get smaller and smaller in comparison to the overall shape with high probability.  Moreover, most of the nontrivial topology occurs at the smallest scales---that is, most of the homology is generated by very small cycles.  In short, by exploring the topology of the Eden model, we quantify small-scale perturbations of the boundary.
 \begin{figure}
    \centering
    \includegraphics[scale=.24]{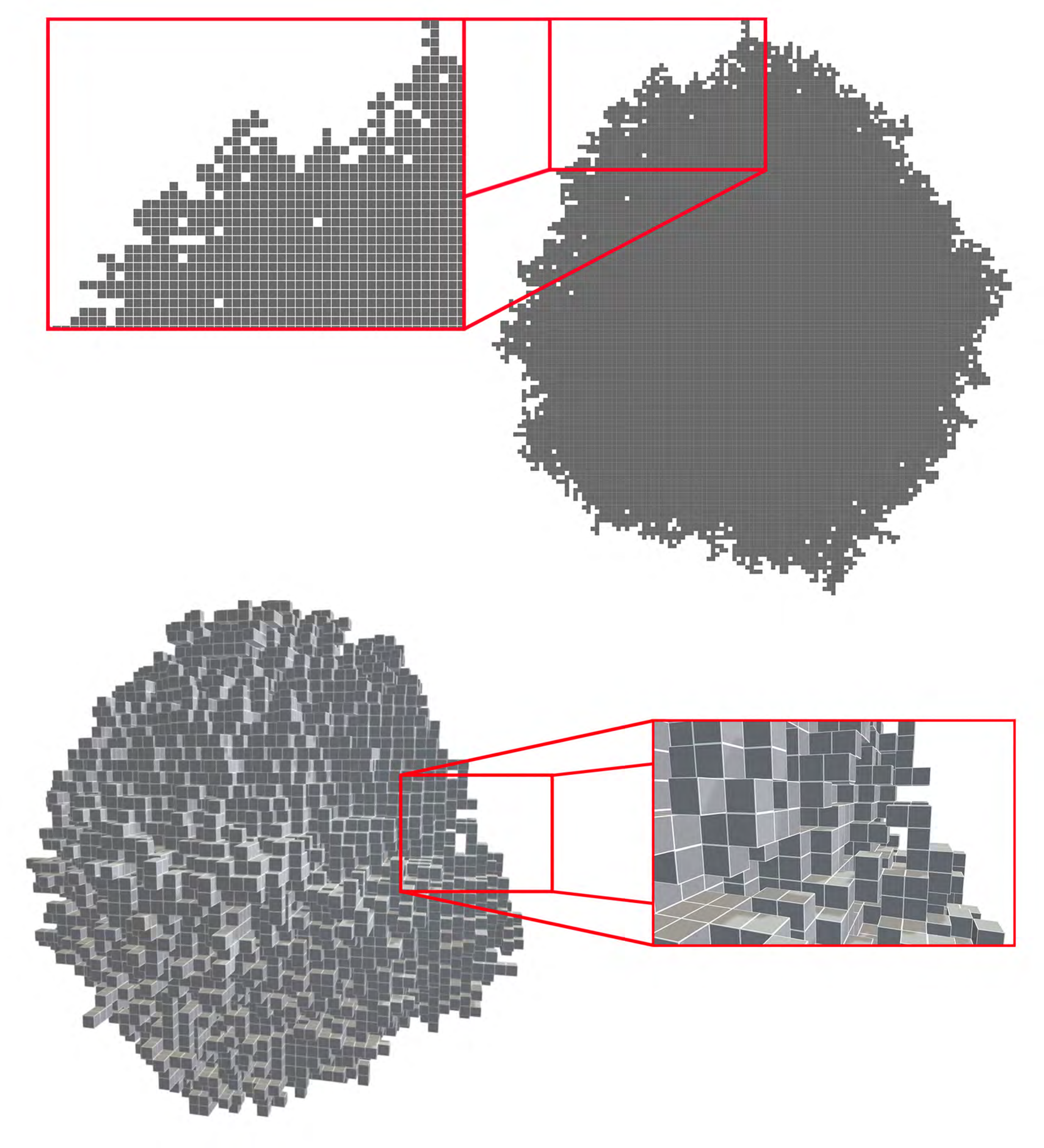}
    \caption{Simulation of the 2D and 3D Eden growth models up to time 10,000 and 30,000 respectively. We zoom in to a portion of the boundary of each model to showcase some local topology generated by one-dimensional holes. The 3D example is available for online interactive exploration at the webpage \url{https://skfb.ly/6SnT9}.}
    \label{fig:big}
 \end{figure}

Stochastic growth models have been applied to study the temporal and spatial dynamics of a wide range of processes including the growth of bacterial cell colonies \cite{vicsek1990self} and tumors \cite{waclaw2015spatial} in biology, the spread of diseases in epidemiology \cite{rhodes1997epidemic}, gelation and crystallization in materials science and physics \cite{family1984kinetics}, and urban growth \cite{teknomo2005unconstrained} in the social sciences. The Eden model is an example of such a model that is simple enough to study analytically yet complex enough to capture important scaling behavior. Its surface is a prototypical model for the growth of interfaces and rough surfaces \cite{barabasi1995fractal,halpin1995kinetic,meakin1998fractals}. For a modified version on a flat substrate, this surface is believed to fall into the Kardar--Parisi--Zhang universality class \cite{kardar1986dynamic, corwin2012kardar, halpin1995kinetic}, a large class of random interfaces characterized by the scaling behavior of the height function. Other systems believed to fall into this class include ballistic deposition and anisotropy-corrected versions of the Eden model in $\Z^d$ \cite{alves2011universal}, while statistics consistent with it have been observed in experiments of paper wetting \cite{PhysRevE.54.685} and turbulent liquid crystals \cite{PhysRevLett.104.230601}. 


The EGM itself has specifically been proposed as a model for wound regeneration \cite{agyingi2018eden} and the growth of bacterial colonies \cite{vicsek1990self}. Similar systems with additional parameters or modified rules for the addition (or subtraction) of tiles have been proposed to model a wide variety of phenomena, such as the magnetic Eden model for aggregations of particles with a fixed spin in a medium\footnote{Constrast with the Ising model, in which the spins of the particles are allowed to change over time.} \cite{ausloos1993magnetic, candia2008magnetic, klymko2017similarity}; cellular automata \cite{deutsch2017growth}; tumor growth \cite{waclaw2015spatial}; and urban growth \cite{ koenig2018system, teknomo2005unconstrained}, among others.

Our results show, informally, that the amount of topology in the Eden model scales roughly with the perimeter. This topology provides a method to characterize the behavior below the interface and distinguishes it from other models which produce similar interfaces on the large scale, but may be topologically trivial or (as in the case of ballistic deposition) have topology which scales with the volume.  Computing the Betti numbers may therefore give additional evidence for or against certain mechanisms of growth. To give a toy example, suppose we locate an interface between populations of two competing species A and B, and we want to understand, based on the synchronic picture, the extent to which one is outcompeting the other.  If species A is not growing its range, we would expect species B to occupy a connected region.  On the other hand, our results indicate that Eden-type growth of species A will \emph{reliably} leave ``voids'', that is, disconnected regions where species B predominates.  If both are growing, then the number of voids on each side will be roughly proportional to the relative growth rate.  Thus the topology of the interface leaves clues about the process of its formation.

Such techniques can perhaps be applied to more complex models found in the literature.  In \cite{kuhr2011range}, an Eden model with mutations is used to simulate the growth of two populations: a ``wild-type'' population and a mutant form spreading within it. The holes in the wild-type and mutant populations exhibit qualitatively different behavior---in both their frequencies and shapes---which depends on the parameters controlling the spread of the mutation (see Figure 3 of \cite{kuhr2011range}).  Topological information could be used to determine such parameters from data.

Furthermore, in applications of stochastic growth models, the geometry of the perimeter strongly influences the interaction with the ambient environment. For example, in materials science, the roughness and porosity are important in a wide variety of contexts, and in marine biology the shape of a coral colony is related to resource acquisition \cite{lartaud2017growth}. This interaction might depend on the local topology of the structure. For example, for a three-dimensional aggregation, the two-dimensional homology corresponds to voids that do not connect to the outside; cells on the surface of these voids do not have the same access to external resources. The one-dimensional homology concentrated on the surface of an aggregation provides a measure of the complexity of that surface; the cells forming a $1$-dimensional homology class could be thought of as a filter in the sense that the medium can flow through them.

\section{Main results}\label{S:main_results}
Our main results concern the rate of growth of the $i$-dimensional homology groups of the Eden growth model. Let $A(t)$ be the $d$-dimensional Eden model at time $t,$ for $d \geq 2,$ and let $\beta_i(t)$ denote the rank of the $i$-dimensional homology (the $i$th \emph{Betti number}) of $A(t)$. Roughly speaking, $\beta_i(t)$ measures the number of ``$i$-dimensional holes'' in $A(t).$ For example, if $d=3$, $\beta_1(t)$ gives the number of tubes through $A(t)$ (for a solid donut, this is one) and $\beta_2(t)$ gives the number of voids of $A(t)$, or bounded components of the complement of $A(t)$ (for a sphere, this is one). See Section \ref{sec:homology} for a technical definition.

The first result relates the growth of $\beta_i(t)$ with that of the \emph{site perimeter} of $A(t)$, the set of tiles adjacent to but not contained in $A(t)$. Write $P_d(t)$ for the volume of the site perimeter.
\begin{theorem}\label{thm_conditional}
    For each $d$ and $1 \leq i \leq d-1$, there is a constant $c=c(d,i)>0$ such that
    \begin{align}
        ct^{(d-1)/d} &\leq \beta_i(t) \leq 2^{d-i}{d \choose i}P_d(t), & i &\leq d-2, \\
        cP_d(t) &\leq \beta_i(t) \leq P_d(t), & i &= d-1,
    \end{align}
    with high probability as $t\rightarrow\infty$.
\end{theorem}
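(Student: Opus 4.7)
The plan is to prove the upper and lower bounds separately, treating the cases $i = d-1$ and $1 \leq i \leq d-2$ differently in each direction.

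For the upper bound with $i = d-1$, each bounded component of $\R^d \setminus A(t)$ (a void) is open and nonempty, so it contains at least one integer unit cube adjacent to $A(t)$; such a cube is by definition in the site perimeter. Distinct voids use disjoint cubes, yielding $\beta_{d-1}(t) \leq P_d(t)$. For $1 \leq i \leq d-2$, I would bound $\beta_i(A(t))$ by the number of $i$-cells on the topological boundary $\partial A(t)$. Each such boundary $i$-cell is a face of at least one site perimeter cube, and each $d$-cube has exactly $\binom{d}{i}2^{d-i}$ faces of dimension $i$, which by overcounting gives $\#\{i\text{-cells on }\partial A(t)\} \leq \binom{d}{i}2^{d-i}P_d(t)$. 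The step $\beta_i(A(t)) \leq \#\{i\text{-cells on }\partial A(t)\}$ I would prove via discrete Morse theory: construct an acyclic matching on the Hasse diagram of $A(t)$ pairing every interior $i$-cell (one surrounded by all $2^{d-i}$ of its ambient $d$-cubes in $A(t)$) with another interior cell of adjacent dimension, so that the critical cells are precisely the boundary cells. The Morse inequality $\beta_i \leq \#\{\text{critical }i\text{-cells}\}$ then closes the argument.

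For the lower bound I would use a local configuration argument exploiting the Markov structure and homogeneity of the Eden dynamics. For each $i$, fix a small finite template configuration $C_i$ of cubes such that its presence at a boundary location creates a nontrivial $i$-cycle in $A(t)$ localized to a neighborhood of that location---for example, $C_{d-1}$ is a shell enclosing an empty cube, and lower-dimensional templates are built analogously from small $i$-spheres of cubes. The Cox--Durrett shape theorem guarantees at time $t$ that $\partial A(t)$ contains at least $\sim t^{(d-1)/d}$ disjoint patches of bounded diameter. A positive-probability/ergodicity argument for the Eden dynamics shows that with probability bounded below by a constant, the prescribed finite configuration is eventually produced near each patch; a concentration inequality (Chernoff or Azuma, using near-independence of far-apart patches) then implies that a positive fraction of patches contain $C_i$ with high probability. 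Because cycles supported in disjoint regions are homologically independent, this yields $\beta_i(t) \geq ct^{(d-1)/d}$. For $i = d-1$ one strengthens the argument by localizing to each perimeter cube---each of which has a positive probability of being quickly sealed off into a small void---giving $\beta_{d-1}(t) \geq cP_d(t)$.

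The main obstacle is the discrete Morse construction in the upper bound for $i \leq d-2$: the acyclic matching must be defined consistently on a random cubical complex with a potentially intricate boundary so that no V-path is cyclic. A natural lexicographic ``sweep'' pairing along a generic direction seems likely to work but requires a careful case analysis to verify. As a backup, one can route through Alexander duality $\beta_i(A(t)) \cong \beta_{d-i-1}(S^d \setminus A(t))$ for $1 \leq i \leq d-2$, reducing the bound to one on the complement, whose boundary $d$-cubes are literally the site perimeter; a symmetric Morse argument there may be more tractable.
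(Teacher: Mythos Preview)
Your upper bound for $i=d-1$ is correct and matches the paper. For $i\le d-2$, however, the discrete Morse route is both harder than necessary and, as literally stated, incorrect: asking the critical cells to be \emph{precisely} the boundary cells forces a perfect matching among the interior cells alone, and this already fails for a $3\times 3$ filled square (the interior has $9$ two-cells, $12$ one-cells, and $4$ zero-cells, so the even/odd counts $13$ and $12$ do not match). The weaker goal---that all critical cells can be chosen to lie on $\partial A(t)$---might be salvageable, but the paper bypasses all of this with a two-line argument you missed: apply Mayer--Vietoris to the decomposition $A(t)\cup\overline{A(t)^c}=\R^d$ to obtain
\[H_i(\partial A(t))\cong H_i(A(t))\oplus H_i\bigl(\overline{A(t)^c}\bigr),\]
whence $\beta_i(A(t))\le \beta_i(\partial A(t))\le \#\{i\text{-cells in }\partial A(t)\}\le 2^{d-i}\binom{d}{i}P_d(t)$. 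No Morse theory, no Alexander duality detour.

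Your lower-bound outline is in the right spirit and close to what the paper does, but two technical points are handled more cleanly there. First, rather than invoking the shape theorem to locate $\sim t^{(d-1)/d}$ disjoint boundary patches, the paper uses a purely deterministic isoperimetric fact: the projection of any polyomino of volume $t$ onto some coordinate hyperplane has $(d{-}1)$-volume at least $t^{(d-1)/d}$. One then ``drops'' disjoint $R$-cubes from above in that coordinate direction so that they land on top of $A(t_0)$; this produces genuinely disjoint empty cubes adjacent to the polyomino without any probabilistic input. Second, the ``positive probability per patch'' step is made rigorous by switching to the site first-passage-percolation description of the Eden model: conditioned on $A(r-2)$, the passage times of the remaining sites are i.i.d.\ exponential, so the events that each dropped cube is filled with the prescribed template by FPP time $r$ are exactly i.i.d., and the law of large numbers suffices---no Azuma or near-independence heuristic is needed. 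The same FPP device, together with thinning the site perimeter to a $3\Z^d$ sublattice to restore independence, yields the $i=d-1$ lower bound $\beta_{d-1}(t)\ge cP_d(t)$; your sketch of ``each perimeter cube is sealed off with positive probability'' glosses over exactly this independence issue.
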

In particular, the rank of the top-dimensional homology (the number of ``voids'') scales with the volume of the perimeter.

Heuristics used in the physics literature \cite{leyvraz1985active} suggest that the volume $P_d(t)$ of the site perimeter of the $d$-dimensional EGM scales as $t^{(d-1)/d}$. This has been proven to be true ``on average'' and ``most of the time'' by Damron, Hanson, and Lam \cite{damron2018size}, but the stronger conjecture that it is true with high probability is wide open. Assuming this conjecture, our theorem shows that the ranks of all homology groups scale with the volume of the perimeter, up to a linear factor. This makes sense on an intuitive level, as any connected local configuration, including those that create topology locally, should occur with some non-zero probability anywhere on the boundary.

The lower bound of Theorem \ref{thm_conditional} is a corollary of a more general result (Theorem \ref{outermost} below): every local configuration (in a cube of sidelength $R$) of filled and empty tiles occurs, with high probability, at least $c(R,d)t^{(d-1)/d}$ times at the boundary of the time-$t$ polyomino.  Thus, for example, cycles of arbitrarily large size, while they are rarer the bigger they are, still occur arbitrarily many times as $t$ increases.


The results of our computational experiments (Section~\ref{sec:comp}) suggest a stronger conjecture about the growth rate:
\begin{conj}\label{edenexp}
    There exists a $C_{i,d}>0$ so that
    \begin{equation} \label{CBettibounds}
        \beta_{i}(t)/t^{\frac{d-1}{d}}\rightarrow C_{i,d}
    \end{equation}
    almost surely as $t\rightarrow\infty$. 
\end{conj}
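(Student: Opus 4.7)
The strategy is to decompose Conjecture \ref{edenexp} into a convergence-in-expectation statement $E[\beta_i(t)]/t^{(d-1)/d}\to C_{i,d}$ and a concentration statement $|\beta_i(t)-E[\beta_i(t)]|=o(t^{(d-1)/d})$ almost surely, whose conjunction implies the desired a.s.\ limit.

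For the concentration, I would form the Doob martingale $M_k = E[\beta_i(t)\mid \mathcal{F}_k]$, where $\mathcal{F}_k$ records the first $k$ tile choices of the Eden process. Since a single tile addition changes $\beta_i$ by at most the uniform constant $2^{d-i}\binom{d}{i}$ extracted from the proof of Theorem \ref{thm_conditional}, Azuma--Hoeffding gives Gaussian concentration at scale $O(\sqrt{t\log t})$. For $d\geq 3$ this is genuinely below $t^{(d-1)/d}$, and Borel--Cantelli along a polynomial subsequence $t_n=n^{k}$ with $1\le k<d$—combined with monotonicity of the void count for $i=d-1$ and with the bounded per-step change for smaller $i$ to interpolate between subsequence points—yields a.s.\ convergence. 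The planar case $d=2$ is more delicate: there $t^{(d-1)/d}=\sqrt{t}$ matches Azuma's scale exactly, so one would need a sharper concentration inequality, perhaps via a log-Sobolev or entropy method that exploits the fact that most tile additions leave $\beta_i$ unchanged.

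For the expectation, the starting point is the Cox--Durrett shape theorem, by which $t^{-1/d}A(t)$ converges a.s.\ to a deterministic convex body $B_d$, giving a boundary of volume $\sim t^{(d-1)/d}$. Building on Theorem \ref{outermost}, which asserts that each fixed local configuration in a window of radius $R$ appears at the boundary at frequency $\Theta(t^{(d-1)/d})$, one would express $E[\beta_i(t)]$ as an integral over the boundary of a local functional—morally, a ``density of $i$-cycles per unit of boundary area.'' This requires two ingredients: (a) a truncation argument showing that cycles of diameter $>R$ contribute $o(t^{(d-1)/d})$ as $R\to\infty$, plausible because such cycles require exponentially rare local patterns; and (b) a local limit theorem saying that the empirical distribution of boundary configurations in a radius-$R$ window converges to a deterministic stationary measure coming from a half-space Eden dynamics, yielding the constant $C_{i,d}$.

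Step (b) is the main obstacle and is the reason this is stated only as a conjecture: it is strictly stronger than the open statement $P_d(t)/t^{(d-1)/d}\to\text{const}$ a.s., for which only the expectation-level results of \cite{damron2018size} are currently known. A resolution would presumably require constructing an invariant measure for the Eden dynamics as seen from a co-moving boundary frame, and is deeply entangled with the conjectural KPZ scaling of the interface. At a minimum, sharp control of macroscopic boundary fluctuations is needed, in order to rule out rare ``spike'' events whose large local curvature could inject anomalous amounts of topology into the Betti count.
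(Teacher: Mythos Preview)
The paper contains no proof of this statement: it is explicitly labeled a \emph{conjecture}, and the only evidence offered is computational (Section~\ref{sec:comp}). There is therefore nothing to compare your argument against, and you in fact acknowledge this yourself when you note that step~(b) ``is the main obstacle and is the reason this is stated only as a conjecture.'' Your decomposition into expectation convergence plus concentration is a reasonable roadmap, and your identification of a local-limit statement for boundary configurations as the crux is accurate: such a statement is strictly stronger than the open conjecture $P_d(t)\sim t^{(d-1)/d}$ discussed after Theorem~\ref{thm_conditional}, and nothing in the paper comes close to it.

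One technical point in your concentration sketch deserves a flag. The bound on the one-step change in $\beta_i$ (Theorem~\ref{allChanges} gives the sharp constants, which are not quite the ones you quote) does \emph{not} by itself bound the Doob martingale increments $M_k-M_{k-1}=E[\beta_i(t)\mid\mathcal F_k]-E[\beta_i(t)\mid\mathcal F_{k-1}]$. Those increments are governed by how much the conditional expectation of the \emph{final} Betti number moves when the $k$th tile choice is altered, and a different early choice can in principle reroute the entire subsequent growth. To apply Azuma or McDiarmid one would need either independent inputs (e.g.\ the FPP passage times of \S\ref{S:FPP}) together with a Kesten-type influence bound, or a direct argument that resampling one step has uniformly bounded effect on the time-$t$ topology. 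Neither is immediate, so even the ``easy'' half of your plan is less routine than the write-up suggests---and for $d=2$, as you note, the target scale $t^{1/2}$ leaves no room at all.
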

The constants suggested by our experiments are $C_{1,2}\approx 1.1$ and $C_{1,3}\approx 0.419$.  While we conducted experiments for higher-dimensional homology and higher-dimensional Eden models, we do not have sufficient evidence to provide reasonable guesses for the other constants.

We have also investigated how the rank of the homology can change in one step, proving another theorem:
\begin{theorem}\label{allChanges}
    If $\beta_i(t)$ is the $i$th Betti number of the $d$-dimensional EGM stochastic process at time $t$, then for all $t$
    \begin{equation} \label{eqn:allChanges}
        -2^{d-1-i}{d-1 \choose i} \leq \beta_i(t) - \beta_i(t-1) \leq 2^{d-i}{d-1 \choose i-1},
    \end{equation}
    and all the values, including the extremal values, are attained with positive probability for all $t \geq 3 \cdot 5^{d-1}$.  Moreover, with high probability, each value is attained $\geq ct$ times before time $t$, for some $c=c(d)>0$.
\end{theorem}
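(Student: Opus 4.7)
The plan is to control the single-step Betti number change via a Mayer--Vietoris argument and then lift the resulting local statements to probabilistic ones. Writing $A(t)=A(t-1)\cup c$ and setting $L:=A(t-1)\cap c\subseteq\partial c\cong S^{d-1}$, the Eden rule forces $L$ to contain at least one $(d-1)$-face of $c$. Since $c$ is contractible, the Mayer--Vietoris sequence yields
\[
-\beta_i(L)\;\leq\;\beta_i(t)-\beta_i(t-1)\;\leq\;\tilde\beta_{i-1}(L).
\]
The inequality \eqref{eqn:allChanges} then reduces to a combinatorial bound on Betti numbers of subcomplexes $L\subseteq\partial c$ that contain at least one top-dimensional face. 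A direct Euler-characteristic computation shows that the maximum of $\beta_i(L)$ is realized by the $i$-skeleton of $\partial c$ together with one full $(d-1)$-face, giving $\binom{d-1}{i}2^{d-1-i}$; analogously, the maximum of $\tilde\beta_{i-1}(L)$ is realized by a single $(d-1)$-face decorated by isolated lower-dimensional cells on the opposite face, giving $\binom{d-1}{i-1}2^{d-i}$. Both extrema are realized by genuine Eden configurations.

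For each integer $k$ in the stated range I would construct an explicit local configuration $\Sigma_k$ of cubes, fitting within a $3\times 5\times\cdots\times 5$ slab of $3\cdot 5^{d-1}$ cells, together with a designated center $c$, such that adding $c$ to $\Sigma_k\setminus\{c\}$ triggers Betti-number change exactly $k$; the family $\Sigma_k$ interpolates between the extremal subcomplexes of the first paragraph. For $t\ge 3\cdot 5^{d-1}$, a length-$t$ Eden history which first builds $\Sigma_k\setminus\{c\}$, then pads with an inert ``finger'' of cubes growing away from $c$, and finally adds $c$ at step $t$, has strictly positive probability, yielding $\Pr[\beta_i(t)-\beta_i(t-1)=k]>0$. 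For the linear-in-$t$ count, Theorem~\ref{outermost} asserts that at each time $s$ every local pattern, including each $\Sigma_k$, appears at $\ge c_k s^{(d-1)/d}$ boundary sites with high probability; combined with the on-average perimeter bound $\mathbb{E}[P_d(s)]=O(s^{(d-1)/d})$ of Damron--Hanson--Lam \cite{damron2018size}, the conditional probability that step $s$ adds the center of a $\Sigma_k$-pattern is at least $c_k s^{(d-1)/d}/P_d(s-1)$, which on a high-probability event controlling the perimeter is bounded below by a positive constant. Azuma--Hoeffding applied to the indicators $X_s=\mathbf{1}[\beta_i(s)-\beta_i(s-1)=k]$ then delivers $\sum_{s\le t}X_s\ge ct$ with high probability.

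The principal obstacle is in the second step: the local Betti-number change computed by gluing $c$ onto $\Sigma_k$ must match the global change in $A(t)$. This requires that no cycle in the gluing complex $L$ accidentally bounds (or fails to bound) through ambient cells of $A(t-1)$. A natural remedy is to attach $\Sigma_k$ to the bulk through a single $(d-1)$-face (a ``thin neck''), so that the Mayer--Vietoris decomposition applied to the entire $A(t-1)$ splits into the local calculation on $\Sigma_k$ plus a trivial contribution from the rest; verifying this rigorously for every $k$ in the range, and checking that the extremal configurations really fit within the $3\cdot 5^{d-1}$ budget, constitutes the bulk of the technical work. A secondary point is converting the expectational perimeter bound into a per-step conditional lower bound, which is handled by standard truncation on the high-probability event $\{P_d(s)\le C s^{(d-1)/d}\}$.
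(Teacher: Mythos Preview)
Your deterministic argument is essentially the paper's: Mayer--Vietoris with the contractible tile is equivalent to the paper's excision plus long exact sequence of the pair, and your identification of the extremal subcomplexes $L\subseteq\partial c$ (the $i$-skeleton together with one full $(d-1)$-face) matches the paper's computation $\rank\tilde H_i(Q_{d-1}\cup Q_d^{(i)})=2^{d-1-i}\binom{d-1}{i}$. Your scheme for $\Sigma_k$ inside a $3\cdot5^{d-1}$ slab, connected to the bulk through a single face, is precisely the paper's $A_{R,S}$ construction in the middle three layers of a $5^d$ box (Lemma~\ref{jump_configs}); the ``thin neck'' Mayer--Vietoris verification you flag as the main technical point is handled there.

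The genuine gap is in your argument for the linear-in-$t$ count. You write that the conditional probability of step $s$ landing on a $\Sigma_k$-center is at least $c_ks^{(d-1)/d}/P_d(s-1)$ and that this is bounded below ``on a high-probability event controlling the perimeter'', to be obtained by ``standard truncation on the high-probability event $\{P_d(s)\le Cs^{(d-1)/d}\}$''. But that event being high-probability is exactly the open conjecture discussed in the paper; the Damron--Hanson--Lam result gives only an average/most-of-the-time statement and does \emph{not} yield $P_d(s)\le Cs^{(d-1)/d}$ with high probability. With your route you would prove Theorem~\ref{condChanges} (which is explicitly stated as conditional on that conjecture), not the unconditional Theorem~\ref{allChanges}. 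The Azuma--Hoeffding step also fails as written: without the perimeter bound you have no uniform lower bound on $\mathbb{E}[X_s\mid\mathcal F_{s-1}]$.

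The paper avoids the perimeter entirely by working in the FPP formulation. It shows that for any local template $K\subset[0,R]^d$ (containing the whole shell and with a marked interior tile $x_0$), the event ``at the moment $x$ is added, its $R$-neighborhood equals $K+(x-x_0)$'' is implied by a box-local event $X_x$ depending only on the passage times inside that $R$-cube. The $X_x$ over disjoint $R$-cubes are i.i.d.\ with positive probability, so with high probability a constant fraction of the $|A(r)|/R^d$ cubes satisfy $X_x$; hence $\ge c|A(r)|$ of all tiles ever added were added in the desired local configuration. This gives $\sum_{s\le t}X_s\ge ct$ directly, with no reference to $P_d$. If you want to repair your approach, the missing idea is this localization: replace the ratio $c_ks^{(d-1)/d}/P_d(s-1)$ by a volume count of i.i.d.\ box events that certify the jump at the instant of addition.
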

Assuming the conjecture on the growth of the perimeter stated above, we can improve the probabilistic portion of this result:
\begin{theorem}\label{condChanges}
    Assume that there is a $C(d)>0$ so that $P_d(t)\leq C(d) t^{(d-1)/d}$ with high probability. Then, for $t>>0$ and for each $-2^{d-1-i}{d-1 \choose i} \leq \ell \leq 2^{d-i}{d-1 \choose i-1}$,
    \[\mathbb{P}\paren{\beta_i(t) - \beta_i(t-1)=\ell} \geq c(d)\]
    for some constant $c(d)>0$.
\end{theorem}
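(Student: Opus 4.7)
The plan is to combine the constructions from the proof of Theorem~\ref{allChanges} with Theorem~\ref{outermost} and the hypothesized upper bound on $P_d$. The proof of Theorem~\ref{allChanges} should produce, for each admissible value $\ell$ in the range $-2^{d-1-i}\binom{d-1}{i} \leq \ell \leq 2^{d-i}\binom{d-1}{i-1}$, an explicit \emph{local configuration} $L_\ell$ of filled and empty tiles inside a cube of some fixed sidelength $R_\ell = R_\ell(d,i)$, together with a marked ``target site'' in the configuration, such that the following locality property holds: whenever $L_\ell$ occurs at a boundary region of the polyomino $A(t-1)$ with its target site on the site perimeter, adding that tile changes $\beta_i$ by exactly $\ell$. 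Locality rests on the fact that adding a single cube only creates or destroys $i$-cycles supported in a bounded neighborhood of the added cell, so the change in $\beta_i$ is determined by the combinatorial type of $A(t-1)$ in a fixed window around the target.

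I would then apply Theorem~\ref{outermost} to $A(t-1)$: with high probability, each configuration $L_\ell$ occurs at least $c_\ell(t-1)^{(d-1)/d}$ times at the boundary, where $c_\ell = c(R_\ell,d) > 0$. Since there are only finitely many values of $\ell$, a union bound over them preserves the high-probability conclusion. Let $G_t$ denote the intersection of this event with the hypothesized event $\set{P_d(t-1) \leq C(d)(t-1)^{(d-1)/d}}$; then $\mathbb{P}(G_t) \to 1$. Conditional on the history up to time $t-1$ and on $G_t$, the next tile is chosen uniformly among the $P_d(t-1)$ sites of the site perimeter, at least $c_\ell(t-1)^{(d-1)/d}$ of which are target sites of copies of $L_\ell$. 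Hence
\[
\mathbb{P}\paren{\beta_i(t) - \beta_i(t-1) = \ell \mid \mathcal{F}_{t-1}} \cdot \mathbf{1}_{G_t} \geq \frac{c_\ell(t-1)^{(d-1)/d}}{C(d)(t-1)^{(d-1)/d}} \mathbf{1}_{G_t} = \frac{c_\ell}{C(d)} \mathbf{1}_{G_t}.
\]
Taking expectations, setting $c(d) := \tfrac{1}{2}\min_\ell c_\ell / C(d)$, and absorbing the $o(1)$ loss from $\mathbb{P}(G_t^c)$ into the constant for $t$ large, yields the theorem.

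The only real obstacle is the locality/bookkeeping point hidden in the first paragraph: one must verify that the configurations built in the proof of Theorem~\ref{allChanges} can be arranged to be \emph{fully local}, meaning that the supports of the $i$-cycles created or destroyed by adding the target tile lie entirely within a fixed-size window, independent of the (random) polyomino outside that window. Once this is confirmed, the argument becomes essentially a ratio-counting exercise---the numerator supplied by Theorem~\ref{outermost}, the denominator by the hypothesis on $P_d$---with everything else being routine probabilistic bookkeeping.
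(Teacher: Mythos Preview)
Your proposal is correct and follows essentially the same route as the paper: apply Theorem~\ref{outermost} to the configurations of Lemma~\ref{jump_configs} to get $\geq c(d)t^{(d-1)/d}$ favorable target sites on the site perimeter, then divide by the hypothesized bound $P_d(t)\leq C(d)t^{(d-1)/d}$. The locality concern you flag is exactly what the paper handles by framing each $5\times\cdots\times 5$ configuration inside a filled shell touching the rest of $A(t-1)$ only along its base, so that Mayer--Vietoris forces the jump in $\beta_i$ to be the local one; note that your heuristic ``cycles are supported in a bounded neighborhood of the added cell'' is not literally true in general, and it is this shell-plus-base device (rather than any general locality of cycle supports) that makes the argument go through.
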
 
The constant $c(d)$ in both these results is $\sim 1/\exp(\exp(d))$, and we believe that this is close to optimal for the rarest cases.  Thus even in the 4-dimensional EGM, one cannot expect every possibility to show up in the course of a reasonable-length simulation, as we indeed see in our computational experiments. Proofs of Theorems \ref{allChanges} and \ref{condChanges} are included in Section~\ref{sec:otherproof}. 

Again, our computational experiments suggest stronger regularity properties for the distribution of these jumps:
\begin{conj}
\label{conj:bettiJumps}
For every $-2^{d-1-i}{d-1 \choose i} \leq \ell \leq 2^{d-i}{d-1 \choose i-1}$,
\[\mathbb{P}\paren{\beta_i(t) - \beta_i(t-1)=\ell}\]
converges to a positive constant as $t \to \infty$.
\end{conj}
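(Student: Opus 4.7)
The plan is to interpret $\mathbb{P}(\beta_i(t)-\beta_i(t-1)=\ell)$ as an expected empirical density along the site perimeter and to reduce its convergence to an ergodic statement about local boundary configurations. Since the tile added at time $t$ is chosen uniformly from the site perimeter of $A(t-1)$, I would first write
\[
\mathbb{P}\paren{\beta_i(t)-\beta_i(t-1)=\ell}=\mathbb{E}\!\left[\frac{N_\ell(t-1)}{P_d(t-1)}\right],
\]
where $N_\ell(s)$ counts site-perimeter tiles whose hypothetical addition to $A(s)$ would change $\beta_i$ by exactly $\ell$. It then suffices to show that $N_\ell(t)/P_d(t)$ converges almost surely (or at least in $L^1$) to a positive limit.

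Next I would localize the numerator. Adding a cube $C$ to $A(t-1)$ affects homology only through $C$ and its faces shared with $A(t-1)$; by Mayer--Vietoris, the resulting jump in $\beta_i$ is a function of (i) the pattern of occupied tiles in the immediate neighborhood of $C$, and (ii) how the faces of $C$ lying in $A(t-1)$ are already joined to each other \emph{within} $A(t-1)$. For any fixed $R$, truncating (ii) to the $R$-neighborhood of $C$ yields a finite-alphabet ``$R$-local type.'' Jumps genuinely requiring connectivity information beyond scale $R$ should be rare: because a site-perimeter tile sits in the thin stochastic collar of the boundary, and distant parts of the random interface are weakly correlated, one expects the fraction of perimeter tiles whose jump type is undetermined at radius $R$ to tend to zero as $R\to\infty$, uniformly in $t$. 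Combined with Theorem \ref{condChanges} and a matching upper bound $P_d(t)\le Ct^{(d-1)/d}$ with high probability (supported by our experiments), this reduces the conjecture to the almost-sure convergence of the density of each $R$-local type.

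The main obstacle is precisely this last step, which amounts to an ergodic theorem for the boundary process of the Eden model: there should exist a shift-invariant limiting measure $\nu$ on local boundary configurations such that, almost surely, the empirical distribution over the site perimeter of $A(t)$ converges weakly to $\nu$. Such a statement is natural in light of the Cox--Durrett shape theorem, which fixes the macroscopic interface $\partial W$, together with the expected spatial mixing of first-passage percolation; one might hope to realize $\nu$ as a Palm-type measure associated with the Busemann functions of the model. However, the much weaker conjecture $P_d(t)\asymp t^{(d-1)/d}$ with high probability remains open and would follow from any argument of this strength. I would therefore regard the first two steps as a clean reduction and the ergodic-theoretic input as the decisive gap, likely requiring new tools in the spirit of Damron--Hanson--Lam \cite{damron2018size} or the recent advances on Busemann measures in first-passage percolation.
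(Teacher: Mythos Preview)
This statement is Conjecture~\ref{conj:bettiJumps} in the paper and is left open: the paper offers no proof, only computational evidence (Figure~\ref{fig:changeBetti}) and the conditional lower bound of Theorem~\ref{condChanges}. There is therefore nothing on the paper's side to compare your attempt against, and your write-up is appropriately framed as a heuristic reduction rather than a proof, with the ergodic step explicitly flagged as the missing ingredient.

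Two remarks on the reduction itself. First, the identity $\mathbb{P}(\Delta\beta_i=\ell)=\mathbb{E}[N_\ell(t-1)/P_d(t-1)]$ is correct, and recasting convergence as an ergodic statement for local boundary configurations is the natural strategy. You are right that any such ergodic input would already imply the perimeter conjecture $P_d(t)\asymp t^{(d-1)/d}$, which is wide open; this is indeed the decisive gap.

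Second, the localization step conceals a further difficulty that is not merely a truncation error. From the long exact sequence of the pair $(A(t),A(t-1))$, the jump $\beta_i(t)-\beta_i(t-1)$ is determined by $\dim\tilde H_{i-1}(Q\cap A(t-1))$, which is local, together with the ranks of the inclusion-induced maps $\tilde H_j(Q\cap A(t-1))\to H_j(A(t-1))$ for $j\in\{i-1,i\}$, which are genuinely global: they ask whether certain cycles on $\partial Q$ bound in $A(t-1)$, and a bounding chain may wander arbitrarily far. Your description of (ii) as ``how the faces of $C$ are joined to each other'' captures only the $H_0$ part of this. The assertion that the fraction of perimeter tiles whose jump is undetermined at radius $R$ tends to zero uniformly in $t$ is therefore itself a substantive conjecture about the boundary geometry; it is not implied by generic spatial mixing of FPP and is not subsumed by the ergodic statement for $R$-local types. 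Both gaps would need to be closed, and neither is within reach of the techniques in the paper.
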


In Section \ref{sec:comp}, we present the results of our computational experiments for the Eden model. First, we consider the rates of growth of the perimeter (Section~\ref{S:Comp_per}) and the Betti numbers (Section~\ref{S:Comp_betti}), and compare the behavior of $\beta_i(t)$ for different values of $i.$ Next, we apply persistent homology in Section~\ref{S:Comp_persistent} to study the amount of time between when an  $i$-dimensional hole first appears in the Eden model and when it is killed by the addition of tiles. Finally, in Section~\ref{S:Compu_areasandshapes} we consider the distributions of the volumes and shapes of the $d-1$-dimensional holes in the Eden model, and how these holes divide as time progresses. The software and data developed in the course of this research is publicly available on GitHub \cite{EdenSoftware}.

\section{Definitions and preliminaries}

To formally define the Eden model and its homology, we think of the regular cubic tiling as endowing $\R^d$ with the structure of an infinite cubical complex whose vertices are $\Z^d$ and whose $d$-cells are translates of $[0,1]^d$.  We call this cubical complex $\CW(\Z^d)$.  A \emph{polyomino} is a union of $d$-cells of this structure (a \emph{pure $d$-dimensional subcomplex}) which is \emph{strongly connected}, that is, its interior is connected; in other words, the interiors of any two $d$-cells are connected via a path which is disjoint from the $(d-2)$-skeleton (cf.\ the definition of a pseudomanifold).  In the combinatorics literature, these are known as \emph{polyominoes} in 2 dimensions and \emph{polycubes} in 3 dimensions.

Given a polyomino $A$, its $i$-skeleton $A^i$ is the union of all $i$-cells in $A$, forming a filtration
\[A^0 \subset A^1 \subset \cdots \subset A^d=A.\]

The \emph{site perimeter} of a polyomino $A$ is the set of $d$-cells of $\CW(\Z^d)$ that are not in $A$ but have $(d-1)$-cells in common with $A$; in other words, $d$-cells $Q$ such that $A \cup Q$ is again a polyomino.  This contrasts with the \emph{boundary} of the polyomino, which is a $(d-1)$-dimensional complex defined using the usual topological notion $\partial A=\overline{A} \cap \overline{A^c}$.\begin{figure} 
\begin{center}
\begin{tikzpicture}[scale = .7] 

\foreach \x in {-1,0,1,2,3} {
    \foreach \y in {-1,0,1,2,3} {
    \path [draw=gray!33!white, fill=gray!33!white] (\x+0.05, \y+0.05)
    -- ++(0,.9) -- ++(.9,0) -- ++(0,-.9) --cycle;
    \path [draw=gray!33!white, fill=gray!33!white] (\x+7.05, \y+0.05)
    -- ++(0,.9) -- ++(.9,0) -- ++(0,-.9) --cycle;
    }
}

\foreach \x/\y in {0 / 0, 0/1, 0/2, 1/0, 1/2, 2/0, 2/1 , 7 / 0, 7/1, 7/2, 8/0, 8/2, 9/0, 9/1, 9/ 2 } { 
\path [draw=gray!66!black, fill=gray!66!black] (.5+\x-0.45, .5+\y-0.45)
-- ++(0,.9)
-- ++(.9,0)
-- ++(0,-.9)
--cycle;
}
\foreach \x/\y in { .5/-.5, 1.5/-.5, 2.5/-.5, 3.5/.5, 3.5/1.5, 2.5/2.5, -.5/.5, -.5/1.5, -.5/2.5, 1.5/1.5, 1.5/3.5, .5/3.5}{
\draw [fill=black, black] (\x,\y) circle [radius=.1];
}

\foreach \x/\y in { .5/-.5, 1.5/-.5, 2.5/-.5, 3.5/.5, 3.5/1.5, 3.5/2.5, 2.5/3.5, -.5/.5, -.5/1.5, -.5/2.5, 1.5/1.5, 1.5/3.5, .5/3.5}{
\draw [fill=black, black] (\x+7,\y) circle [radius=.1];
}

\end{tikzpicture}
\caption{Two polyominoes with the site perimeter highlighted.  Each has one 1-dimensional hole, i.e. $\beta_1 = 1$.}
\label{figure:one}
\end{center}
\end{figure}
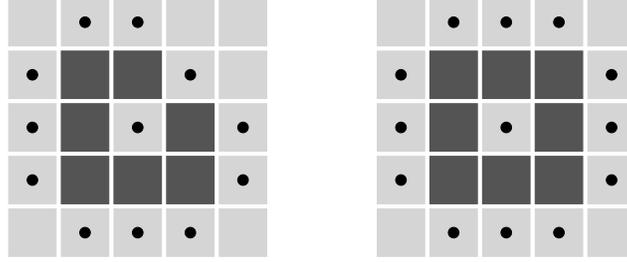

The Eden cell growth model is a stochastic process which produces a polyomino $A(t)$. It starts at time 1 with one $d$-cube at the origin, and at each time step, $A(t+1)=A(t) \cup Q_{t+1}$ where $Q_{t+1}$ is a $d$-cube chosen uniformly at random from the site perimeter.

The Eden model is often equivalently defined with the cubes replaced by vertices of the lattice $\mathbb{Z}^d$, thought of as a graph with neighboring vertices linked along each axial direction.  At each time step, a single unfilled vertex along the site perimeter is filled.  In this formulation, the site perimeter consists of unfilled vertices which share an edge with a filled vertex, and the boundary consists of edges between filled and unfilled vertices (the boundary of the set of filled vertices in the sense of graphs).  Our definition in terms of cubes is needed to define the homology of the Eden model; we take note of this equivalent formulation because it is the usual way of formalizing first-passage percolation, as we describe below.

\subsection{Homology}
\label{sec:homology}
The homology groups of a space are a sequence of abelian groups representing the ``$i$-dimensional holes'' of the complex.  For example, a solid donut has a single 1-dimensional hole, while a 2-sphere has a single 2-dimensional void; these correspond to the ranks of the homology groups $H_1$ and $H_2$, respectively.  A ``0-dimensional hole'' is a disconnection, and the rank of $H_0$ is the number of connected components of the space.  Homology groups of cubical complexes are most easily defined combinatorially, but are topological invariants.  The reader is referred to an algebraic topology textbook such as \cite{hatcher2002algebraic} for more information.

In this paper we use homology with coefficients in the field $\mathbb{F}_2=\{0,1\}$; we suppress this in our notation.  Given a cubical complex $A$, let $C_i(A)$ be the vector space of \emph{$i$-chains}, that is, formal $\mathbb{F}_2$-linear combinations of $i$-cells.  The boundary homomorphism $\partial_i:C_i(A) \to C_{i-1}(A)$ sends each cell to the formal sum of the $(i-1)$-cells on its boundary.  Then the $i$th homology is the vector space of \emph{$i$-cycles}, which have zero boundary, modulo the $i$-dimensional boundaries of $(i+1)$-chains:
\[H_i(A)=\ker(\partial_i)/\partial_{i+1}(C_{i+1}(A)).\]
The $i$th \emph{Betti number} $\beta_i(A)$ is the dimension of $H_i(A)$.  Thus $\beta_0(A)$ is the number of connected components---always 1 for a polyomino.  Moreover, for a $d$-dimensional polyomino, $\beta_i(A)=0$ for all $i \geq d$.  This is obvious for $i>d$ and true for all subsets of $\mathbb{R}^d$ in the case $i=d$.  This leaves the cases $1 \leq i \leq d-1$ as the interesting ones to measure for the Eden model.

\subsection{First-passage percolation and the Eden model} \label{S:FPP}
First-passage percolation (FPP) is a well-studied family of stochastic processes on the lattice $\mathbb{Z}^d$, thought of as a graph; see \cite{auffinger201750} for an extensive survey.  Here we describe how the Eden model can be thought of as a special case of FPP, which will be useful in several of our proofs.

We first define two types of stochastic processes.  In \emph{bond FPP}, the lattice is given by a graph metric with edge lengths pulled i.i.d.\ from some probability distribution, and the process of interest is the growth of the $t$-ball around the origin in this metric.  \emph{Site FPP} is similar but a bit harder to define; here every vertex of the graph (called a \emph{site}) is assigned an i.i.d.\ number called a \emph{passage time}.  The passage time of a site $p$ governs the time from when a site adjacent to $p$ first gets ``infected'' to when $p$ gets infected.  We again start with the origin infected at time $0$ and study the set of infected sites at time $t$.

Now consider site FPP where the passage times are distributed exponentially with mean $1$.  The exponential distribution is important because it is ``memoryless'' in the sense that
\[P(X>t+s \mid X>s)=P(X>t).\]
Thus, conditioning on the event that the ball at time $t$ is a polyomino $A$, the additional time required to add a specific adjacent site is again exponential with mean $1$, and is independent from when other adjacent sites are added and from the passage times of non-adjacent unfilled sites. In particular, every site in the perimeter has the same probability of being infected next.  But this is exactly how the Eden model works, except that in the Eden model the time to add the next tile is fixed. Consequently, the Eden model can be thought of as a (variable) time rescaling of this FPP model.  This was first observed by Richardson \cite{richardson1973random}.

\subsection{Variations on the model}
Our results are stable with respect to certain variations on the setup described above.

First, instead of uniformly selecting a tile in the site perimeter, one could uniformly select an face on the boundary and add the adjacent tile along the face.  In other words, the probability that an element of the site perimeter is selected is weighted by the number of connections between it and the polyomino at time $t$. This can be modeled using first-passage percolation like the usual Eden model, but using bond FPP rather than site FPP.  All of our proofs can easily be modified to produce analogous results for this model.

Another potential variation relates to how the topology of the Eden model is defined; rather than connecting cubes that touch at corners, one could consider two cubes to be connected only if they share a face.  The advantage of this idea is that this aligns with the notion of adjacency used in defining growth.  There are several ways of formalizing this idea. One is to consider the interior of the cubical complex constructed above.  Alternatively, one can build a new cubical tessellation by placing grid points at the centers of cubes of the polyomino; the intersection of this tessellation with our polyomino is a deformation retract of its interior, and this gives a combinatorial characterization.  The proof of Theorem \ref{thm_conditional} works without modification with this redefinition.  One can also get an analogue of Theorems \ref{allChanges} and \ref{condChanges}, though with different constants: to understand the effect of adding a cube one has to work with the geometry of its dual cross polytope, rather than of the cube itself.  In the end, though, this variation simply switches the role of the Eden ball and its complement: Alexander duality tells us that a sufficiently nice domain gives the same topological information as the closure of its complement, and we can obtain such a domain either by slightly thickening the Eden ball or by slightly thickening its complement.

Finally, our results can easily be extended to other regular tessellations of $\R^d$ besides the cubical one.  In fact, much of what we say seems to depend only on the large-scale geometry of the contractible cell complex $\CW(\Z^d)$.  One direction for further research would be to understand similar models on tessellations of hyperbolic space, nilpotent Lie groups, other symmetric spaces, CAT(0) cube complexes, and other contractible spaces on which a group acts geometrically. For what little is known about first-passage percolation on spaces of interest in geometric group theory, see \cite{BenT}.

\subsection{Combinatorics of cubes and polyominoes}
The following is easy to see:
\begin{lemma}
    The number of $i$-dimensional faces of the $d$-dimensional cube is $2^{d-i}{d \choose i}$.
\end{lemma}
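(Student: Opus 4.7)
The plan is to set up a bijection between $i$-dimensional faces of the standard cube $[0,1]^d$ and pairs consisting of a size-$i$ subset $S \subseteq \{1,\dots,d\}$ together with a function $\varepsilon \colon \{1,\dots,d\} \setminus S \to \{0,1\}$. Given such a pair, the associated $i$-face is
\[F_{S,\varepsilon} = \{x \in [0,1]^d : x_j = \varepsilon(j) \text{ for } j \notin S\},\]
which is an axis-aligned $i$-dimensional subcube (the free coordinates are exactly those indexed by $S$).

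First I would verify that this assignment is a bijection. Every face of the cube is itself an axis-aligned box, so it has a well-defined set of coordinate directions along which it has positive extent; this set has size equal to the dimension of the face, and the coordinates outside it are each pinned to either $0$ or $1$, recovering $S$ and $\varepsilon$. Conversely, distinct pairs $(S,\varepsilon)$ give distinct subsets of $[0,1]^d$, so the map is injective.

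Once the bijection is established, the count is immediate: there are $\binom{d}{i}$ choices for $S$, and for each of the $d-i$ coordinates not in $S$ there are $2$ choices of value, giving $2^{d-i}$ choices for $\varepsilon$. Multiplying yields $2^{d-i}\binom{d}{i}$ faces. There is no real obstacle here; the only point requiring any care is the observation that every face of the cube is of the form $F_{S,\varepsilon}$, which follows from the fact that the cube is the product $[0,1]^d$ and its faces are precisely the products of faces of the factors $[0,1]$ (each of which is either the whole interval or an endpoint).
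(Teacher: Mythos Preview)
Your argument is correct: the bijection between $i$-faces of $[0,1]^d$ and pairs $(S,\varepsilon)$ is exactly the standard way to count them, and your justification that every face arises this way (via the product structure) is sound. The paper itself does not give a proof at all, merely stating that the result is ``easy to see,'' so your write-up is more detailed than what the authors provide and matches the implicit reasoning behind their claim.
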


In the proof of Theorem \ref{thm_conditional} we require the following combinatorial fact about polyominoes in general.
\begin{lemma} \label{lem:proj}
    Let $A$ be any polyomino in $\mathbb{R}^d$.  Then for some $1 \leq i \leq d$, the projection of $A$ to the $i$th coordinate hyperplane (denoted $\pi_i(A)$) has
    \[\vol_{d-1}(\pi_i(A)) \geq \vol_d(A)^{(d-1)/d}.\]
\end{lemma}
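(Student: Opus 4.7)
The plan is to invoke the Loomis--Whitney inequality, which is exactly tailored to bound a set's volume in terms of its coordinate-hyperplane projections. Recall that for any measurable $A\subset\mathbb{R}^d$, Loomis--Whitney gives
\[
\vol_d(A)^{d-1}\;\leq\;\prod_{i=1}^d \vol_{d-1}(\pi_i(A)).
\]
Taking $d$-th roots rewrites the right-hand side as the geometric mean of the $d$ projection volumes, and the geometric mean is dominated by the maximum, so
\[
\vol_d(A)^{(d-1)/d}\;\leq\;\Bigl(\prod_{i=1}^d \vol_{d-1}(\pi_i(A))\Bigr)^{1/d}\;\leq\;\max_{1\leq i\leq d}\vol_{d-1}(\pi_i(A)).
\]
Thus some coordinate index $i$ realizes the desired bound. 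This is the whole argument in essence; nothing about the lemma uses strong connectedness of $A$, so it holds for any finite union of lattice $d$-cubes.

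Since the paper is largely self-contained, I would spend a sentence recording the polyomino version of Loomis--Whitney rather than citing it as a black box. The combinatorial statement needed is: if $A$ is a union of $n$ unit cubes of $\CW(\Z^d)$, then $n^{d-1}\leq \prod_{i=1}^d N_i$, where $N_i=\vol_{d-1}(\pi_i(A))$ is the number of unit $(d-1)$-cubes in the axis-aligned shadow. A clean way to prove this discretely is by induction on $d$: slice $A$ by the hyperplanes $\{x_d=k\}$ into horizontal layers $A_k$, each a $(d-1)$-dimensional polyomino of size $n_k$ with projections onto the first $d-1$ coordinate hyperplanes contained in the projections of $A$. Apply the inductive hypothesis to each $A_k$, sum, and finish with the Cauchy--Schwarz step that is the usual heart of Loomis--Whitney.

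The main (and only) obstacle is bookkeeping the induction cleanly; once the Loomis--Whitney inequality is in hand the passage to $\max_i$ via geometric mean is trivial. Since this inequality is standard, I would cite it and give only the one-line geometric-mean argument, at most noting that the polyomino case follows by applying the continuous version to $A$ viewed as a union of closed unit cubes, whose projections are exactly the lattice $(d-1)$-cube shadows appearing in the discrete statement.
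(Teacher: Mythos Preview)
Your proof is correct, and it takes a genuinely different route from the paper's own argument.

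The paper proceeds via the isoperimetric inequality for polyominoes, $\vol_{d-1}(\partial A) \geq 2d\,\vol_d(A)^{(d-1)/d}$ (cited from Bollob\'as--Leader). For a \emph{column-convex} polyomino the boundary volume equals $\sum_i 2\vol_{d-1}(\pi_i(A))$, so the pigeonhole principle gives the lemma immediately in that case. To handle general polyominoes, the paper applies a Steiner-type ``shaking'' in each coordinate direction to produce a column-convex polyomino with the same volume and no larger projections, and then applies the convex case. This is a page-long compression argument.

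Your approach bypasses all of this: Loomis--Whitney gives $\vol_d(A)^{d-1}\le\prod_i \vol_{d-1}(\pi_i(A))$ directly, and the passage to the maximum via the geometric mean is one line. This is considerably shorter and arguably more natural, since Loomis--Whitney is precisely the sharp inequality relating volume to coordinate shadows. The cost is that you import Loomis--Whitney as a black box, whereas the paper imports the isoperimetric inequality as its black box; neither is obviously more elementary, though Loomis--Whitney has a very short inductive proof (as you sketch). One could also note that the paper's shaking argument is essentially re-deriving, in an ad hoc way, the monotonicity of projections under compression that underlies both the isoperimetric inequality and Loomis--Whitney itself.
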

\begin{proof}
    The isoperimetric inequality for polyominoes \cite{BL}, attained by cubes, is
    \[\vol_{d-1}(\partial A) \geq 2d\vol_d(A)^{(d-1)/d}.\]
    Suppose first that $A$ is \emph{convex}, that is the intersection of $A$ with any line parallel to any coordinate axis is connected.  (Note that a convex polyomino is not a convex set!) This is equivalent to saying that every $(d-1)$-cube in $\partial A$ is visible from infinitely far in some coordinate direction.  In that case,
    \[\vol_{d-1}(\partial A)=\sum_{i=1}^d 2\vol_{d-1}(\pi_i(A)),\]
    which completes the proof.
    
    Now take a general polyomino $A$.  We will construct a convex polyomino $A_d$ with the following properties:
    \begin{enumerate}[label=(\roman*)]
    \item $\vol_d(A_d)=\vol_d(A)$.
    \item For each $i$, $\vol_{d-1}(\pi_i(A_d)) \leq \vol_{d-1}(\pi_i(A))$.
    \end{enumerate}
    This comparison proves the lemma for $A$.
    
    We construct $A_d$ by ``lining up'' the columns of $A$ in each coordinate direction.  That is, let $A_0=A$.  Once we have built $A_{i-1}$, we make
    it into $A_i$ by turning on gravity in the $i$th direction and ``shaking'', that is, letting all the cubes fall down to some hyperplane below the polyomino.
    
    Clearly condition (i) holds.  We need to show that (ii) holds and that $A_d$ is column convex.  We show both of these by analyzing each shake, that is, each transition from $A_{j-1}$ to $A_j$.
    
    During the $i$th shake, the polyomino becomes \emph{column convex} in the $i$th coordinate direction, that is, its intersection with any line in that direction is connected.  It remains to show that during subsequent shakes, $j>i$, this convexity is preserved.  Since what happens to a cube depends only on its column, we look at the intersection of the polyomino with each plane in the $ij$-direction. If we start with connected columns lined up on one side, then the $j$th shake sorts those columns by height, without changing their convexity.
    
    Finally, we show that each $j$th shake does not increase the volume of $\pi_i(A)$. Certainly if $i=j$ the projection doesn't change.  Otherwise we again look at the intersection with each plane in the $ij$-direction.  After the shake, what we see from the $i$th coordinate direction is the height of the largest column.  Previously, every cube in that column was either visible or obstructed by something, so the volume of the projection can only decrease.
\end{proof}

\section{Proof of Theorem \ref{thm_conditional}} \label{sec:proof}

We start with the (easy) upper bound.  Write $A(t)$ for the polyomino at time $t$.  Applying the Mayer--Vietoris sequence to $A(t) \cup \overline{A(t)^c}=\mathbb{R}^d$, we see that
\[H_i(\partial A(t)) \cong H_i(A(t)) \oplus H_i(\overline{A(t)^c}).\]
The rank of the left side is bounded by the number of $i$-cells in the boundary, giving the bound $\beta_i(t) \leq 2^{d-i}{d \choose i}P_d(t)$ since $2^{d-i}{d \choose i}$ is the number of $i$-cells in a $d$-cube.

In the case $i=d-1$, we can get a stronger bound since $\beta_{d-1}(t)$ is the number of voids in $A(t)$, in other words, the number of bounded connected components of its complement.  Since every connected component of the complement must include a cell of the site perimeter, $\beta_{d-1}(t) \leq P_d(t)$.

We now prove the lower bound.  Here is the basic outline.  Given a time $t$, we find $\Omega(t^{(d-1)/d})$ disjoint empty boxes of side length $R$ at the perimeter of a somewhat earlier stage $A(t_0)$.  Then we show that once we reach time $t$, at least a constant proportion of these boxes end up containing a structure which adds one to the $i$th Betti number.

The boxes are obtained as follows.  By Lemma \ref{lem:proj}, the projection of $A(t_0)$ in some coordinate direction has volume at least $t_0^{(d-1)/d}$.  Thus (thinking of that direction as ``up'') we can drop $\Omega(t_0^{(d-1)/d})$ boxes from overhead so that they land in different places on top of the polyomino $A(t_0)$.  We formalize this in proving the following more general result.
\begin{theorem}\label{outermost}
    Let $S$ be any $d$-dimensional polyomino which is contained in the cube $[0,R]^d$ and includes the entire base of that cube (i.e.\ $[0,R]^{d-1} \times [0,1]$).  There is a constant $c=c(R,d)>0$ so that $S$ occurs (perhaps in rotated form) as the intersection of $A(t)$ with at least $ct^{(d-1)/d}$ different cubes of width $R$,  with high probability as $t \to \infty$.
\end{theorem}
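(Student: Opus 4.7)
The plan is to choose an auxiliary time $t_0$ slightly before $t$, identify $\Omega(t^{(d-1)/d})$ widely--spaced topmost cubes of $A(t_0)$ in some axial direction using Lemma \ref{lem:proj}, and then show that above each such cube a rotated copy of $S$ fills the overlying $R$-box of $A(t)$ with constant positive probability; a concentration argument over these essentially independent events then gives the claim.

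\emph{Seed cubes.} Take $t_0 := t - Kt^{(d-1)/d}$, where $K = K(R,d)$ is a constant to be fixed below. Applying Lemma \ref{lem:proj} to $A(t_0)$ and rotating coordinates so that the good projection is along $e_d$ (simultaneously rotating $S$), we obtain at least $t_0^{(d-1)/d} = \Theta(t^{(d-1)/d})$ distinct axial columns of $A(t_0)$, each with a topmost cube. Superimpose a grid of mesh $M = M(K,R,d)$ on the $(d-1)$-dimensional projection and keep one topmost cube $c_i$ per nonempty cell; this leaves $N = \Omega(t^{(d-1)/d})$ cubes whose $M$-neighborhoods $\widehat B_i \subset \Z^d$ are pairwise disjoint. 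Let $B_i$ be the $R$-cube sitting directly on top of $c_i$. By the topmost condition, $B_i \cap A(t_0)$ lies in the base slab of $B_i$.

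\emph{Local event.} The heart of the proof is
\[
\mathbb{P}\paren{A(t)\cap B_i = S \,\big|\, A(t_0)} \geq p
\]
for some $p = p(R,d) > 0$ independent of $i$. I would pass to the site--FPP framework of Section \ref{S:FPP}. By the Cox--Durrett shape theorem, $\tau^*(s)/s^{1/d} \to \lambda$ almost surely, so the FPP window $\Delta\tau := \tau^*(t) - \tau^*(t_0)$ concentrates near $\lambda K / d$ and in particular satisfies $\Delta\tau \leq \Lambda := 2\lambda K/d$ with probability tending to $1$. Conditionally on $A(t_0)$, the memoryless property makes the continuation a fresh FPP driven by i.i.d.\ $\mathrm{Exp}(1)$ passage times on $\Z^d \setminus A(t_0)$. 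The configuration $A(t)\cap B_i = S$ is implied by the conjunction of two local events on the passage times inside $\widehat B_i$: first, along some fixed valid growth order of $S$ starting at $c_i$, each consecutive passage time is at most $\Lambda/|S|$, which ensures $S$ is filled in by FPP time $\tau^*(t_0) + \Lambda$; second, every cell of $\widehat B_i \setminus S$ has passage time exceeding $2\Lambda$, which prevents infection from reaching $B_i \setminus S$ either through $S$ or across the buffer within the window. These are events on finitely many independent exponentials, so their intersection has probability at least some $p = p(R,d) > 0$.

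\emph{Concentration and main obstacle.} Since the $\widehat B_i$ were chosen disjoint, the indicators $Y_i$ of the two-event conjunction above depend on disjoint collections of passage times and are therefore independent; on the high-probability event $\{\Delta\tau \leq \Lambda\}$ we have $Y_i \leq X_i := \mathbf{1}\{A(t) \cap B_i = S\}$. A Chernoff or Chebyshev bound yields $\sum_i X_i \geq \sum_i Y_i \geq (p/2) N = \Omega(t^{(d-1)/d})$ with high probability, and a union bound with $\mathbb{P}(\Delta\tau \leq \Lambda) \to 1$ delivers the theorem. The main obstacle is the local event itself: infection in $\widehat B_i$ could, a priori, enter $B_i \setminus S$ from outside $\widehat B_i$ along a path of short cumulative passage time, so the buffer $\widehat B_i$ must be thick enough relative to $\Lambda$, and the "large passage time" clause must cover all of $\widehat B_i$, not just $B_i$. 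Getting the quantitative trade-off right---how large $K$ (hence $\Lambda$) must be so that a given $S$ can form, versus how thick $M$ must be so that outside infection is blocked within FPP time $\Lambda$---is where the constants $K$, $\Lambda$, $M$, and $p$ are ultimately pinned down; once this balance is struck the remaining steps are routine.
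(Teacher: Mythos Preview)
Your outline follows the paper's strategy closely---drop boxes onto the shadow via Lemma~\ref{lem:proj}, then use the FPP/memoryless reformulation to force each box to contain a copy of $S$ with constant probability---but there is a genuine gap in your time--change step that the paper's argument is specifically designed to avoid.

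\textbf{The main gap: the lower bound on $\Delta\tau$.}  You need a \emph{two--sided} bound on $\Delta\tau=\tau^*(t)-\tau^*(t_0)$: an upper bound so that sites of $\widehat B_i\setminus S$ with passage time $>2\Lambda$ remain empty at Eden time $t$, and a \emph{lower} bound so that $S$ (which your event only guarantees is filled by FPP time $\tau^*(t_0)+\Lambda$) is actually filled by $\tau^*(t)$.  As written you only invoke $\Delta\tau\le\Lambda$, which is the wrong direction for the ``$S$ fills'' half.  More importantly, the Cox--Durrett shape theorem does \emph{not} give concentration of $\Delta\tau$: it only says $\tau^*(s)=\lambda s^{1/d}+o(s^{1/d})$, and when $t-t_0=Kt^{(d-1)/d}$ the main term $\lambda(t^{1/d}-t_0^{1/d})\approx \lambda K/d$ is a constant, while the error term $o(t^{1/d})$ swamps it.  An upper bound $\Delta\tau\le C$ is salvageable from the isoperimetric inequality $P_d(s)\ge c\,s^{(d-1)/d}$ (not from the shape theorem), but a uniform \emph{lower} bound $\Delta\tau\ge c'>0$ would require an upper bound $P_d(s)\le C\,s^{(d-1)/d}$ on the site perimeter---precisely the open conjecture the paper is careful not to assume.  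The paper sidesteps this entirely by working in FPP time throughout: it conditions on $A(r-2)_{\mathrm{FPP}}$, forces the local event so that $\tilde K_j\cap A(s)_{\mathrm{FPP}}=S+y_j$ for \emph{every} $s$ in the fixed window $(r-1,r+1)$, and only at the very end converts to Eden time via the crude volume comparison $|A(r)|\le C|A(r-2)|$ (Lemma~\ref{lem:growth}).  No control of $\Delta\tau$ is ever needed.

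\textbf{Two smaller points.}  First, your claim that $B_i\cap A(t_0)$ lies in the base slab is not justified: $c_i$ is topmost only in its own column, while $B_i$ spans $R^{d-1}$ columns, any of which could rise higher.  The paper fixes this by partitioning the hyperplane into $R$-cubes $K_j$ and setting $h(K_j)$ to be the maximum height over \emph{all} columns projecting into $K_j$, so that $\tilde K_j$ is disjoint from $A(r-2)$.  Second, the buffer $\widehat B_i$ and the accompanying ``main obstacle'' discussion are unnecessary: if a site $p\in B_i\setminus S$ has passage time from $t_0$ exceeding $2\Lambda$, then $p$ cannot be infected before FPP time $\tau^*(t_0)+2\Lambda$ regardless of where the infection arrives from, so no spatial buffer is needed to block outside intrusion.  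The paper uses no buffer; it simply asks that $\rho_p\ge 3$ for $p\in\tilde K_j\setminus(S+y_j)$.
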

Before proving Theorem \ref{outermost}, we use it to finish the proof of Theorem \ref{thm_conditional}. Let $1 \leq i \leq d-1$ and set
\[S=([0,5]^{d-1} \times [0,1]) \cup ([2,3]^{d-i-1} \times [1,4]^{i+1}) \setminus [2,3]^d \subset [0,5]^d\,.\]
That is, $S$ is the base together with a ``handle'' homotopy equivalent to $S^i$.  The theorem guarantees $ct^{(d-1)/d}$ copies of $S$ whose intersection with the remainder of $A(t)$ is contained in the base.  Thus $A(t)$ is the union of two pieces: all the copies of $S$ on one side, and the rest of $A(t)$ together with the bases of the copies of $S$ on the other; the intersection is a disjoint union of contractible components, one for each copy of $S$.  Thus by the Mayer--Vietoris theorem, $\beta_i(t) \geq ct^{(d-1)/d}$.

\subsection{Proof of Theorem \ref{outermost}}

To prove Theorem \ref{outermost} we will use the reformulation of the Eden model in terms of first-passage percolation, as described in \S\ref{S:FPP}.  We now keep track of time in the FPP model, which we indicate by $r$ to contrast with $t$ for Eden time and to suggest that it is roughly the radius of the polyomino; the notation $A(r)$ and $P_d(r)$ indicates the Eden model in FPP time and the volume of its site perimeter for the rest of the section.  We also write $|A(r)|$ for the volume of $A(r)$, i.e.\ $t$.  Finally we define the \emph{passage time from $r$} to be the passage time of a site if it is not in the site perimeter of $A(r)$, and the time from $r$ to infection if it is. The memorylessness of the exponential distribution implies that, given $A(r)=A$, the passage times from $r$ to sites not in $A$ are are i.i.d.\ exponential, with no difference between sites in and outside the site perimeter.

Our approach is to find at least $c(d)|A(r-2)|^{(d-1)/d}$ copies of $S$ in $A(r)$ with high probability. Thus, to prove the theorem, we also need to know that $|A(r)| \leq C(d)|A(r-2)|$.  This follows from the Cox--Durrett shape theorem \cite{cox1981some}, which shows in particular that there is a constant $V_0$ such that for every $\epsi>0$, with high probability
\begin{equation*}
(1-\epsi)V_0r^d<|A(r)|<(1+\epsi)V_0r^d.
\end{equation*}
However, in the interest of keeping the overall argument elementary we also provide the following much cruder estimate.  Since this estimate is stated in terms of the site perimeter, it is also useful for our later argument about $\beta_{d-1}(t)$.
\begin{lemma} \label{lem:growth}
    With high probability as $r \to \infty$,
    \[|A(r)| \leq |A(r-2)|+CP_d(r-2) \leq (1+2dC)|A(r-2)|,\]
    where $C=C(d)$ is a constant.  In particular, $P_d(r) \leq (1+2dC)P_d(r-2)$.
\end{lemma}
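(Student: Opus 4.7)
The plan is to work in the FPP time coordinate and exploit the memorylessness of exponential passage times. Conditioning on the $\sigma$-algebra $\mathcal{F}_{r-2}$ determined by the process up to FPP time $r-2$, the dynamics restart with fresh i.i.d.\ $\mathrm{Exp}(1)$ passage times on the complement of $A(r-2)$, so $|A(r)|-|A(r-2)|$ is simply the count of cells added in an FPP time interval of length $2$ starting from the configuration $A(r-2)$. It therefore suffices to bound this count by $C(d)\,P_d(r-2)$ with high probability.

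The key step is to stochastically dominate the Eden process by a pure birth process. At any instant, new cells are added at rate equal to the current site perimeter $M$, and each addition changes $M$ by at most $2d-2$: the filled cell leaves the perimeter, while at most $2d-1$ of its neighbors (those previously neither in $A$ nor in the perimeter) enter it. After $k$ additions, $M \leq P_d(r-2)+(2d-2)k$, so the waiting time between consecutive additions stochastically dominates an $\mathrm{Exp}\bigl(P_d(r-2)+(2d-2)k\bigr)$ variable; write $T_n$ for the sum of $n$ independent such exponentials for $k=0,\dots,n-1$. A standard thinning argument realises this coupling and shows $|A(r)|-|A(r-2)| \leq_{\mathrm{st}} \max\{n : T_n \leq 2\}$.

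A direct computation gives, conditional on $P_d(r-2)=p$,
\[\mathbb{E}[T_n] = \sum_{k=0}^{n-1}\frac{1}{p+(2d-2)k} \sim \frac{1}{2d-2}\log\!\left(1+\frac{(2d-2)n}{p}\right), \qquad \mathrm{Var}(T_n) \leq \frac{n}{p^2}.\]
Choose $C=C(d)$ so large that, with $n=\lceil Cp\rceil$, the above mean exceeds $3$ for all sufficiently large $p$. Since $|A(r-2)|\to\infty$ almost surely, the isoperimetric inequality for polyominoes ensures $P_d(r-2)\to\infty$ almost surely, so Chebyshev's inequality applied conditionally on $\mathcal{F}_{r-2}$ yields $\mathbb{P}(T_n \leq 2) \to 0$ as $r \to \infty$. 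This proves the first inequality $|A(r)| \leq |A(r-2)| + CP_d(r-2)$ with high probability.

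The remaining inequalities are bookkeeping. Each cell of the site perimeter is adjacent to $A(r-2)$, so $P_d(r-2) \leq 2d\,|A(r-2)|$ and hence $|A(r)| \leq (1+2dC)|A(r-2)|$. Since each single addition increases $P_d$ by at most $2d-2 \leq 2d$, the first inequality implies $P_d(r)-P_d(r-2) \leq 2d\bigl(|A(r)|-|A(r-2)|\bigr) \leq 2dC\,P_d(r-2)$, so $P_d(r) \leq (1+2dC)P_d(r-2)$. The main obstacle is the concentration step: one must verify carefully that the variance bound $\mathrm{Var}(T_n) = O(1/p)$ is strong enough for Chebyshev to push the conditional probability to $o(1)$ uniformly as $p \to \infty$, and that the stochastic-domination coupling faithfully preserves the event $\{|A(r)|-|A(r-2)| \geq n\}$ under conditioning on $\mathcal{F}_{r-2}$.
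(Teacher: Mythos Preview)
Your proof is correct and takes a genuinely different route from the paper. The paper proceeds by iterating a short-time estimate: it fixes $\epsi$ with $\mathbb{P}(\rho<\epsi)=\tfrac{1}{2d+1}$ and couples the sites infected during $[r,r+\epsi]$ to a forest of $P_d(r)$ independent subcritical Galton--Watson trees (one rooted at each perimeter site, branching according to which neighbors have passage time $<\epsi$); each tree has expected size $\tfrac12$, so $|A(r+\epsi)|\leq |A(r)|+P_d(r)$ with high probability, and iterating $\lceil 2/\epsi\rceil$ times produces $C=2^{\lceil 2/\epsi\rceil}$. You instead handle the full time-$2$ increment in one shot by dominating the addition process with a Yule-type pure birth chain of linearly growing rate and applying Chebyshev to its $n$th jump time. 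The worries you flag at the end are not genuine obstacles: the coupling is realized by writing each Eden waiting time as $U_k/p_k$ for i.i.d.\ $U_k\sim\mathrm{Exp}(1)$ and comparing with $U_k/(p+(2d-2)k)$, and the variance bound $\mathrm{Var}(T_n)=O(1/p)$ gives conditional probability $O(1/p)\to 0$ since $p=P_d(r-2)\to\infty$ almost surely by isoperimetry. Both arguments lean on memorylessness in the same place; yours is more direct and yields a cleaner constant, while the paper's branching-tree picture makes the geometry of the spreading infection a bit more visible.
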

\begin{proof}
    We will show that there is an $\epsi=\epsi(d)>0$ such that with high probability, $|A(r+\epsi)| \leq |A(r)|+P_d(r)$, and in particular $P_d(r+\epsi) \leq 2P_d(r)$. This will imply the lemma with $C=2^{\lceil 2/\epsi \rceil}$.
    
    Let $\epsi$ be so that $\mathbb{P}(\rho_p<\epsi)=\frac{1}{2d+1}$, where $\rho_p$ is the passage time from $r$ at any site $p \notin A(r)$.  Consider a rooted infinite $(2d-1)$-ary tree equipped with passage times on the nodes distributed via the same exponential distribution.  The expected size $E$ of the maximal subtree containing the root (if nonempty) whose nodes all have passage times $<\epsi$ satisfies the recurrence relation
    \[E=\frac{1}{2d+1}(1+(2d-1)E);\]
    thus $E=\frac{1}{2}$.  This bounds the expected size of the subtree reached in time $\epsi$.
    
    Now we show that $V(r+\epsi)$ is bounded above by the total size of all these subtrees for a collection of $P_d(r)$ independent such trees.  We associate the roots of the trees to the cubes of the site perimeter of $A(r)$, and then map each tree to $\mathbb{Z}^d$ via a graph homomorphism by thinking of paths in the tree as corresponding to reduced words on $d$ letters and their inverses, with one letter missing from the initial position corresponding to a neighbor of the root site which is in $A(r)$.
    
    We give a coupling between the weight distribution on the collection of trees and the passage times from $r$ for sites outside $A(r)$, in which each site is coupled to some node which maps to it.  Namely, the nodes in the site perimeter are associated to the corresponding tree.  Then we couple each subsequent site to a neighbor of the node coupled to the neighboring site reached at the earliest time.  Thus the coupling between the probability spaces depends on the values pulled from preceding distributions; this doesn't affect any probabilities since all that changes is \emph{which} i.i.d.\ exponentially distributed weight corresponds to a given site.
    
    In the end, every site in $A(r+\epsi)$ is coupled to a node which is reached at time $<\epsi$.  Since the expected number of nodes in each tree attained after time $\epsi$ is less than $1$, with high probability, $|A(r+\epsi)|<|A(r)|+P_d(r)$.
\end{proof}

By Lemma \ref{lem:proj}, the projection of $A(r-2)$ in some coordinate direction (without loss of generality, the $x_d$ direction) has volume $\geq |A(r-2)|^{(d-1)/d}$.  In particular, if we partition the plane $x_d=0$ into coordinate cubes of side length $R$, some number $N \geq R^{-(d-1)}|A(r-2)|^{(d-1)/d}$ of those cubes intersect this projection.  For each such cube $K_j$, let $h(K_j)$ be the maximal $x_d$-coordinate of a point of $A(r-2)$ whose $d$th projection lies in $K$.  Thus the $d$-dimensional cube $\tilde K_j=K_j \times [h(K_j),h(K_j)+R]$ touches, but does not intersect $A(r-2)$.

We finish by showing:
\begin{lemma}
    There is a $c(R)>0$ such that with high probability, for at least $c(R)N$ values of $j$, $1 \leq j \leq N$, we have $\tilde K_j \cap A(r)=S+y_j$, where $y_j$ is defined so that $\tilde K_j=[0,R]^d+y_j$.
\end{lemma}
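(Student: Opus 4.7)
The plan is to define, for each $j$, an event $E_j$ depending only on the passage times from $r-2$ of cells inside $\tilde K_j$, having probability at least some constant $p_0 = p_0(R,d) > 0$, and implying $\tilde K_j \cap A(r) = S + y_j$. Because the $K_j$'s tile the plane $x_d = 0$ they are pairwise disjoint, so the prisms $\tilde K_j$ are also pairwise disjoint, and conditional on $A(r-2)$ the events $E_j$ then depend on disjoint i.i.d.\ families of exponential passage times and are independent. A Chernoff bound will then yield at least $c(R)N$ successful $j$'s with high probability.

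Concretely, set $\delta = 2/R^d$ and let $E_j$ be the event that every cell in $S + y_j$ has passage time from $r-2$ less than $\delta$, while every cell in $\tilde K_j \setminus (S + y_j)$ has passage time from $r-2$ greater than $2$. Since $|\tilde K_j| = R^d$ and these passage times are i.i.d.\ exponential with mean $1$ conditional on $A(r-2)$, the probability of $E_j$ is at least
\[
p_0 := (1 - e^{-\delta})^{R^d}(e^{-2})^{R^d} > 0,
\]
a constant depending only on $R$ and $d$.

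To see that $E_j$ implies $\tilde K_j \cap A(r) = S + y_j$: by the choice of $h(K_j)$, the cell $c_0 \in S + y_j$ directly above the topmost cube of $A(r-2)$ in column $K_j$ lies in the site perimeter of $A(r-2)$, so under $E_j$ it is infected by time $r - 2 + \delta$. Since $S$ is strongly connected and has at most $R^d$ cells, induction along a path in $S+y_j$ from $c_0$ to any other cell shows that all of $S+y_j$ is infected by time $r - 2 + R^d\delta = r$. Conversely, for any $c \in \tilde K_j \setminus (S+y_j)$, requiring the passage time from $r-2$ to exceed $2$ forces the infection time of $c$ to exceed $r$ regardless of which neighbor infects it first, so $c \notin A(r)$.

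Finally, $N$ is random but satisfies $N \geq R^{-(d-1)}|A(r-2)|^{(d-1)/d}$, which tends to infinity with high probability as $r \to \infty$, so a Chernoff bound applied conditional on $A(r-2)$ yields at least $p_0 N / 2$ successful $E_j$'s with probability tending to $1$. The main subtlety is the asymmetry between the two halves of $E_j$: the condition on $S+y_j$ has to force positive infection, which uses both strong connectedness of $S$ and the fact that $c_0$ sits in the site perimeter, while the condition on $\tilde K_j \setminus (S+y_j)$ must block infection \emph{without any reference to what happens outside $\tilde K_j$}, which is precisely what the weaker requirement ``passage time $> 2$'' achieves and which is essential for preserving independence of the events.
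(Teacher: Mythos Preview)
Your proof is correct and follows essentially the same strategy as the paper's: define, for each $j$, an event depending only on the passage times from $r-2$ of the cells in $\tilde K_j$, forcing fast times on $S+y_j$ and slow times on the complement, then use independence (from disjointness of the $\tilde K_j$) and a law-of-large-numbers/Chernoff argument. The only differences are cosmetic: the paper uses thresholds $R^{-d}$ and $3$ where you use $2/R^d$ and $2$, and the paper notes the stronger conclusion that $\tilde K_j \cap A(s)=S+y_j$ holds for all $s$ in an open interval around $r$, which is not needed for the lemma as stated.
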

\begin{proof}[Proof of lemma]
%
    For a site $p \notin A(r-2)$, let $\rho_p$ denote its passage time from $r-2$.  As outlined above, the $\rho_p$ are i.i.d.\ for all points outside $A(r-2)$.  Let $X_j$ be the event that for all $p \in \tilde K_j$,
    \begin{align*}
        \rho_p &\leq R^{-d} && \text{if }p \in S+y_j \\
        \rho_p &\geq 3 && \text{if }p \notin S+y_j.
    \end{align*}
    Clearly, the $X_j$ are i.i.d.\ and each $X_j$ occurs with positive probability.  Therefore there is a constant $c(R)>0$ such that with high probability at least $c(R)N$ of the $X_j$ occur.
    
    Now notice that if $X_j$ occurs, then for some $p$ in the base of $\tilde K_j$, $A(r-2+R^{-d})$ contains $p$.  Every point in $S+y_j$ is connected to that point by a path through $S+y_j$ of length certainly $\leq R^d-1$. Therefore, for $r-1<s<r+1$, $A(s)$ contains all the points of $S+y_j$ and none of the points of $\tilde K_j \setminus (S+y_j)$.  This proves the lemma.
\end{proof}

\subsection{Proof of the lower bound for top-dimensional holes}

Finally, we prove the stronger lower bound $\beta_{d-1}(t) \geq cP_d(t)$.  We will show the following:
\begin{lemma}
    There is a $c>0$ such that with high probability, there are at least $cP_d(r-2)$ voids of volume 1 in $A(r)$.
\end{lemma}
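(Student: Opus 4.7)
My plan is to leverage the first-passage percolation (FPP) reformulation of the Eden model from Section~\ref{S:FPP}: conditional on $A(r-2)$, the passage times $\rho_q$ from FPP-time $r-2$ are i.i.d.\ exponential with mean $1$ for all cells $q \notin A(r-2)$. I will identify $\Omega(P_d(r-2))$ pairwise disjoint small neighborhoods along the boundary and show that each one independently produces a volume-$1$ void with probability bounded below by a constant depending only on $d$.

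For each cell $p$ in the site perimeter of $A(r-2)$, let $B_p = p + [-1,1]^d$ and define the event
\[
  F_p : \quad \rho_p \geq 3 \quad \text{and} \quad \rho_q \leq \eta \text{ for every } q \in B_p \setminus A(r-2) \setminus \{p\},
\]
where $\eta = \eta(d) > 0$ is a small constant chosen below. Since the relevant passage times are independent and $|B_p \setminus A(r-2) \setminus \{p\}| \leq 3^d - 1$, the conditional probability $\mathbb{P}(F_p \mid A(r-2))$ is at least $e^{-3}(1-e^{-\eta})^{3^d-1}$, a positive constant.

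The key (deterministic) step is to show that $F_p$ implies $p$ is a volume-$1$ void in $A(r)$. Because $p$ lies in the site perimeter of $A(r-2)$, some face-neighbor $q_0$ of $p$ is in $A(r-2) \cap B_p$ and serves as a seed. The set $B_p \setminus \{p\}$ is face-connected in $\mathbb{Z}^d$ for $d \geq 2$, and in fact any two face-neighbors of $p$ can be linked by a path of length at most $D(d) = O(d)$ in $B_p \setminus \{p\}$. Running the FPP dynamics starting from $q_0$, each step through a cell of $B_p \setminus A(r-2) \setminus \{p\}$ contributes at most $\eta$ to the infection time, and steps through additional cells of $A(r-2)$ contribute nothing, so every face-neighbor of $p$ is infected by FPP-time $r - 2 + D(d)\eta$. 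Choosing $\eta < 2/D(d)$ makes this $< r$, while $\rho_p \geq 3$ forces $p \notin A(r)$. I expect this combinatorial verification to be the main obstacle—one must rule out that some adversarial shape of $A(r-2) \cap B_p$ blocks growth from reaching all face-neighbors of $p$—but this is handled by the fact that growth may pass freely through any cells of $A(r-2)$ inside $B_p$.

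Finally, partition the site perimeter into $3^d$ residue classes of $\mathbb{Z}^d$ modulo $3$ and let $S$ denote the largest, so $|S| \geq 3^{-d} P_d(r-2)$ and distinct $p, p' \in S$ satisfy $\|p - p'\|_\infty \geq 3$; hence the boxes $B_p$ are pairwise disjoint. The events $\{F_p : p \in S\}$ then depend on disjoint collections of passage times and are conditionally independent given $A(r-2)$, each with probability at least $c_1(d) > 0$. A Chernoff bound yields that at least $\tfrac{1}{2} c_1(d) |S| \geq c(d) P_d(r-2)$ of them occur with high probability, each producing a distinct volume-$1$ void in $A(r)$.
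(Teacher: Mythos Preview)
Your proposal is correct and follows essentially the same approach as the paper: pass to a residue class of the site perimeter modulo $3$ to obtain disjoint $3^d$-boxes, define for each chosen site $p$ the event that $\rho_p$ is large while the surrounding passage times are small, use conditional independence and a law-of-large-numbers/Chernoff argument, and verify deterministically that each such event forces a volume-$1$ void at $p$ in $A(r)$. The only differences are cosmetic---the paper restricts attention to the sites sharing a $(d-2)$-face with $p$ rather than the full punctured box $B_p\setminus\{p\}$, and uses the fixed threshold $1/2$ in place of your $\eta$---and your write-up is in fact more careful about the connectivity step than the paper's.
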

Since by Lemma \ref{lem:growth}, $P_d(r) \leq C(d)P_d(r-2)$, this suffices.
\begin{proof}
    For $\sigma \in \mathbb{Z}^d$, let $\Psi_\sigma$ be the set of sites in the intersection of $\sigma+3\mathbb{Z}^d$ with the site perimeter of $A(r-2)$. We can choose $\sigma$ so that $\Psi_\sigma$ contains at least $1/3^d$ of the site perimeter.
    
    Given a site $p \in \Psi_\sigma$, let $X_p$ be the event that the passage time from $r-2$ is $>2$ for $p$ and $\leq 1/2$ for all sites that share a $(d-2)$-face with $p$.  The $X_p$ are i.i.d.\ and each $X_p$ occurs with positive probability.  Therefore there is a constant $c>0$ such that with high probability, at least $c \cdot 3^d|\Psi_\sigma|$ of the $X_p$ occur.
    
    It is easy to see that if $X_p$ occurs, then $A(r)$ contains all the neighbors of $p$, but not $p$.
\end{proof}

\section{Proof of Theorems \ref{allChanges} and \ref{condChanges}}\label{sec:otherproof}

We now endeavor to understand the possible changes in $\beta_i$ at a single timestep.  Let $A(t)$ be the polyomino at time $t$, and $Q$ be the tile added at time $t+1$.  Then by excision and the long exact sequence of a pair, we have
\[H_i(A(t+1),A(t)) \cong H_i(Q,Q \cap A(t)) \cong \tilde H_{i-1}(Q \cap A(t)),\]
where $\tilde H_i$ indicates reduced homology.  The long exact sequence of the pair $(A(t+1),A(t))$ then indicates that
\[-\max \rank H_i(Q \cap A(t)) \leq \beta_i(t+1)-\beta_i(t) \leq \max \rank H_{i-1}(Q \cap A(t)),\]
where the maximum is taken over possible subcomplexes of the $d$-dimensional cube which could be $Q \cap A(t)$.

We now compute this maximal rank.  Notice that $Q \cap A(t)$ has to include at least one $(d-1)$-dimensional face in order for us to be able to add the tile $Q$.  Without loss of generality, we assume this is the base of the cube.  Since adding $i$-cells can only increase it and adding $(i+1)$-cells can only decrease it, $\rank H_i(Q \cap A(t))$ is maximized when $A(t)$ includes the entire $i$-skeleton of the cube but no $(i+1)$-cells outside the base.

Write $Q_r=[0,1]^r \subset \mathbb{R}^d$ equipped with the standard cell structure.  Then it is enough to compute
\[\rank \tilde H_i(Q_{d-1} \cup Q_d^{(i)})\]
where $ Q_d^{(i)}$ is the $i$-skeleton of $Q_d$. Notice that $Q_{d-1} \cup \bigl(Q_{d-1}^{(i)} \times [0,1]\bigr)$ is contractible and obtained by adding $(i+1)$-cells to $Q_{d-1} \cup Q_d^{(i)}$; the number of these $(i+1)$-cells is the same as the number $J(d-1,i)$ of $i$-cells in the $(d-1)$-cube.  Therefore
\[\rank \tilde H_i(Q_{d-1} \cup Q_d^{(i)})=J(d-1,i)=2^{d-1-i} {d-1 \choose i}.\]
This demonstrates equation \eqref{eqn:allChanges}.

It remains to show that every change in $\beta_i$ within this range is attained by some configuration. The Eden model produces any polyomino with positive probability, so it is enough to demonstrate:
\begin{lemma} \label{jump_configs}
\begin{enumerate}[label=(\alph*)]
    \item For each $1 \leq i \leq d-1$ and each $0 \leq k \leq J(d-1,i)$, there is a polyomino  in which adding a tile decreases $\beta_i$ by $k$ and increases $\beta_{i+1}$ by $J(d-1,i)-k$.
    \item For each $1 \leq k \leq J(d-1,0)=2^{d-1}$, there is a configuration in which adding a tile increases $\beta_1$ by $k$.
\end{enumerate}
\end{lemma}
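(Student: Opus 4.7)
The plan is to exhibit, for each prescribed Betti-number change, an explicit polyomino $A(t)$ and tile $Q\notin A(t)$; since every finite polyomino occurs as $A(t)$ for some $t$ with positive probability in the Eden process, this yields the positive-probability conclusion. The computations use the Mayer--Vietoris sequence for $A(t+1)=A(t)\cup Q$ with intersection $I=A(t)\cap Q$: because $Q$ is contractible and $A(t)$ is connected,
\[
\beta_j(A(t+1))-\beta_j(A(t)) = \dim\tilde H_{j-1}(I) - \rank\phi_{j-1} - \rank\phi_j,
\]
where $\phi_j:\tilde H_j(I)\to\tilde H_j(A(t))$ is induced by inclusion.

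\textbf{Part (a).} Take $I_0=Q_{d-1}\cup Q^{(i)}$; as in the main argument, $\tilde H_\ast(I_0)$ is concentrated in degree $i$ with rank $J(d-1,i)$, one generator $\gamma_F=\partial(F\times[0,1])$ per $i$-cell $F$ of the base $Q_{d-1}$. Build $A(t)$ so that $Q\cap A(t)=I_0$ and exactly $k$ of the $\gamma_F$'s remain nontrivial in $H_i(A(t))$. Then $\rank\phi_i=k$ and $\rank\phi_{i-1}=\rank\phi_{i+1}=0$, so the formula yields $\beta_i(A(t+1))-\beta_i(A(t))=-k$ and $\beta_{i+1}(A(t+1))-\beta_{i+1}(A(t))=J(d-1,i)-k$. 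The construction comprises: (i) the cube $C_0$ directly below $Q$ together with, for each $i$-face $F'$ of $Q$ outside the base, the unique $d$-cube $C_{F'}$ whose intersection with $Q$ is exactly $F'$ (the one opposite $Q$ in every constant coordinate of $F'$), which jointly yield $Q\cap A(t)=I_0$; (ii) a ``spine'' of cubes routed at distance $\geq 2$ from $Q$ (for instance inside $\{x_d\leq -2\}$ with short bridges) joining everything into a strongly connected polyomino; and (iii) for each chosen $\gamma_F$, an attached polyomino far from $Q$ containing an $(i+1)$-chain whose boundary in $A(t)$ equals $\gamma_F$, obtained by thickening a small outward pushoff of the vertical cell $F\times[0,1]$ into a cubical polyomino, grafted onto the spine and placed disjoint from the other attachments.

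\textbf{Part (b).} Build $A(t)$ so that $I$ has exactly $k+1$ contractible components. Then $\tilde H_0(I)\cong\mathbb{F}_2^{k}$ and $\tilde H_j(I)=0$ for $j\geq 1$; since $A(t)$ is connected, $\phi_0=0$, and the formula gives $\beta_1(A(t+1))-\beta_1(A(t))=k$. Take $C_0$ below $Q$ (contributing the base as one component) and, for $k$ chosen vertices $v_1,\dots,v_k$ of the top $(d-1)$-face of $Q$ (which has $2^{d-1}\geq k$ vertices), the unique $d$-cube $C_{v_j}$ meeting $Q$ only at $v_j$; each contributes an isolated component. A spine at $\{x_1\leq -2\}$ with short detours to each $C_{v_j}$ avoiding $Q$ connects the construction into a polyomino.

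\textbf{Main obstacle.} The main technicality is routing the spine, bridges, and $(i+1)$-chain attachments so that $I$ is preserved exactly and so that each attachment kills only the intended generator. This reduces to checking disjointness: cubes at distance $\geq 2$ from $Q$ in a fixed coordinate direction are disjoint from $Q$ as closed sets, and different attached polyominoes can be placed in mutually disjoint regions. These verifications are elementary but somewhat tedious, especially for part (a), where the routing must not accidentally create new intersection cells with $Q$ or new homological relations.
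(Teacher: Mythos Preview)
Your strategy and part (b) match the paper's almost exactly; the difference is in how the configuration for part (a) is built. The paper constructs, for any pair $R\subseteq S\subseteq Q_d^{(d-1)}$, a polyomino $A_{R,S}$ inside the $5^d$ block around $Q$ that is homotopy equivalent to $S$ and meets $Q$ in exactly $R$: cubes adjacent to $Q$ are included iff they meet $Q$ inside $R$, and the boundary cubes of the $5^d$ block are included according to which face of $Q$ their region of the natural $3^d$ partition corresponds to in $S$. This makes the homotopy type of $A_{R,S}$ equal to $S$ \emph{by construction}, so $H_*(A_{R,S}\cup Q)\cong H_*(S,R)$ and the Betti jumps follow immediately by taking $R=Q_{d-1}\cup Q_d^{(i)}$ and letting $S$ add some of the vertical $(i{+}1)$-cells $F\times[0,1]$; no separate analysis of $\phi_i$ is required. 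Your route instead tracks $\rank\phi_i$ directly, which is correct in principle but leaves several verifications implicit: that $\phi_i$ is injective in the base construction (i)+(ii) (true---$C_0\cup\bigcup_{F'} C_{F'}$ deformation retracts onto $I_0$, and the spine can only add $1$-cycles---but worth a sentence); that the surviving $\gamma_F$ are linearly \emph{independent} in $H_i(A(t))$ after capping, not merely individually nontrivial; and that the cappings in (iii) can be realized by cubes whose intersection with $Q$ stays inside $I_0$. On this last point your phrase ``far from $Q$'' is slightly off: any $(i{+}1)$-chain with boundary $\gamma_F$ must meet $\partial Q$, so the thickened pushoff necessarily includes cubes adjacent to $Q$, and one must check these contribute nothing new to $A(t)\cap Q$. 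All of this is doable, as you note, but the paper's $A_{R,S}$ device sidesteps the routing entirely by making the homotopy type transparent from the outset.
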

\begin{proof}
    Given subcomplexes $R \subseteq S \subseteq Q_d^{(d-1)}$, we will construct a set $A_{R,S}$ of tiles in the $5 \times \cdots \times 5$ grid centered at $Q_d$ that is homotopy equivalent to $S$ and intersects $Q_d$ in $R.$ A tile $Q'$ adjacent to $Q_d$ is included if and only if $Q' \cap Q_d$ is contained in $R$.  The tiles in the boundary of the $5\times \cdots \times 5$ grid  are included according to the following criterion.  The planes containing the $d-1$-faces of $Q_d$ partition $\mathbb{R}^d$ into $3^d$ regions. The intersection of the closure of such a region with $Q_d$ consists of exactly one face of $Q_d.$  A boundary tile not in the top or bottom layer is included if and only if the region containing it intersects $Q_d$ in a face of $S$.
    
    \begin{figure}
    \centering
        \includegraphics[width=0.4\textwidth, scale=1.8]{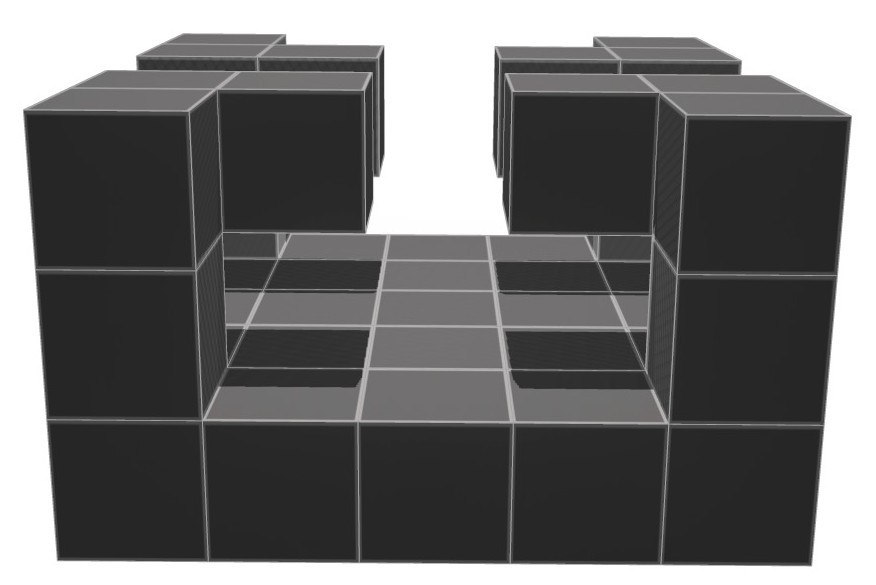}
    \qquad
        \includegraphics[width=0.4\textwidth, scale=1.8]{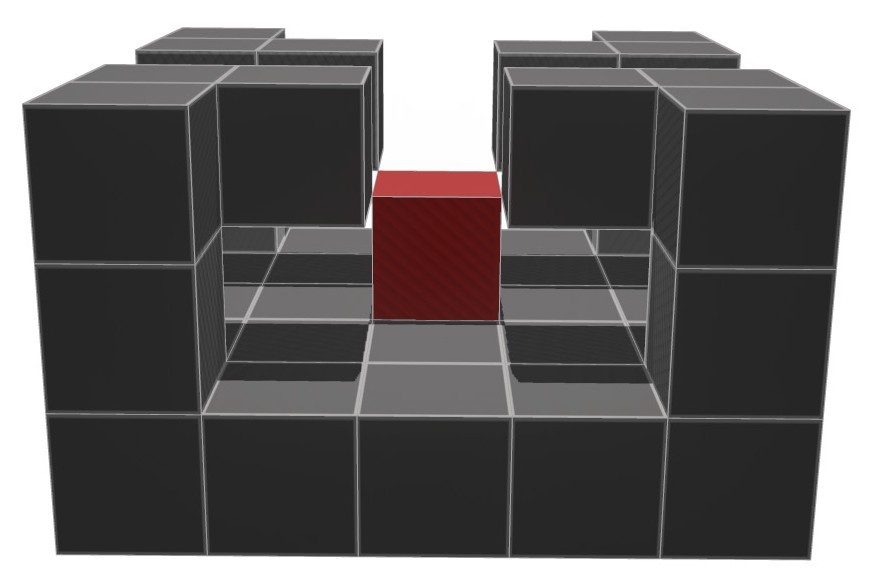}
    \caption{Adding the central cube to this configuration increases $\beta_1$ by $4$; this is the construction given in the proof of Lemma \ref{jump_configs}(b), altered slightly for visibility.}
    \end{figure}
    
    $A_{R,S}$ is homotopy equivalent to $S$, and $A_{R,S} \cap Q_d=R$; thus using the Mayer--Vietoris theorem one sees that
    \[H_i(A_{R,S} \cup Q_d) \cong H_i(S,R).\]
    Then to fulfill (a) we use $A_{R,S}$ with $R=Q_{d-1} \cup Q_d^{(i)}$ and
    $R \subseteq S \subseteq Q_{d-1} \cup Q_{d-1}^{(i)} \times [0,1]$, with $S$ containing $k$ of the extra $(i+1)$-dimensional faces.  To fulfill (b) we use $A_{R,S}$ with $R$ comprising $Q_{d-1}$ and $k$ vertices of the upper face of $Q_d$, and with $S$ adding in the vertical edges connecting those vertices to the base.  In both cases, adding in the center tile changes the topology as desired.
\end{proof}
Now we show that with high probability, each such jump happens at least $ct$ times between time $0$ and $t$ for some $c=c(d)>0$.  In particular, we show that a constant percentage of the time, the tile added at step $s$ is locally configured as in Lemma \ref{jump_configs}, and that local configuration is attached to the rest of the polyomino only by the base; hence by the Mayer--Vietoris theorem, the change in the overall Betti numbers is the same as the change in the local Betti numbers.

For this we use the FPP formulation of the Eden model; in fact our proof works in a wide range of FPP models.
\begin{theorem}
    Consider a site FPP model in $\mathbb{Z}^d$ whose probability distribution on passage times not supported away from $0$ or away from $\infty$.  We denote the polyomino at time $r$ in this model by the random variable $A(r)$.  Then there is some $c(R)>0$ depending on the distribution such that the following holds.  Let $K$ be a (strict) sub-polyomino of $[0,R]^d$ which contains the entire boundary, and mark a tile $x_0$ inside $[0,R]^d$ and outside but adjacent to $K$. We say that $x \in P_K$ if the tile $x$ is added to the polyomino at a time $r_x \leq r$, and
    \[A(r_x) \cap ([0,R]^d+(x-x_0))=K\cup x+(x-x_0).\]
    Then with high probability $|P_K| \geq c(R)|A(r)|$.
\end{theorem}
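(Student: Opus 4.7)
The plan is to adapt the strategy of Theorem \ref{outermost}: identify a collection of disjoint candidate boxes, show each becomes a valid $P_K$-site under an event depending only on local passage times, and use independence together with concentration. The difference is that since $K$ contains the entire boundary of $[0,R]^d$ rather than only a single face, the candidate boxes lie in the bulk of $A(r)$ rather than on its surface, so we expect $\Theta(|A(r)|)$ rather than $\Theta(|A(r)|^{(d-1)/d})$ candidates. Fix a sublattice $L \subset \Z^d$ of index $(R+1)^d$ so that the translated boxes $B_y := [0,R]^d + (y-x_0)$ for $y \in L$ are pairwise disjoint. Using the hypothesis on the passage-time distribution, choose $\epsilon > 0$ and $M > 6R^d\epsilon$ so that $\{\tau \leq \epsilon\}$, $\{\tau \in [3R^d\epsilon, 4R^d\epsilon]\}$, and $\{\tau \geq M\}$ each have positive probability. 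For $y \in L$, writing $z := y - x_0$, let $G_y$ be the event that every $p \in K + z$ has $\tau_p \leq \epsilon$, that $\tau_y \in [3R^d\epsilon, 4R^d\epsilon]$, and that every $q \in ([0,R]^d \setminus (K \cup \{x_0\})) + z$ has $\tau_q \geq M$. Since these events involve disjoint sets of passage times, they are independent across $y \in L$, each with positive probability $p_0 = p_0(R)$.

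The deterministic core is: if $G_y$ holds and $y \in A(r)$, then $y \in P_K$. Let $s_0$ be the first time a tile of $B_y$ is infected; this tile must lie on the inner boundary of $B_y$, which is contained in $K+z$. Strong connectedness of $K$ together with the $\epsilon$-bound on passage times propagates infection through all of $K + z$ by time $s_0 + R^d \epsilon$. The moderate value of $\tau_y$ then forces $s_0 + 3R^d\epsilon \leq T_y \leq s_0 + 5R^d\epsilon$, while every $q \in ([0,R]^d \setminus (K \cup \{x_0\})) + z$ has $\tau_q \geq M$ and neighbors only inside $B_y$, so $T_q \geq s_0 + M > T_y$. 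Consequently $A(T_y) \cap B_y = (K+z) \cup \{y\}$, which is the definition of $y \in P_K$.

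The main obstacle is that the event $\{y \in A(r)\}$ depends on the passage times inside $B_y$ and is correlated with $G_y$. To decouple, introduce the restricted first-passage time $T_y^{\mathrm{out}}$: the minimum infection time, in the FPP on $\Z^d \setminus B_y^\circ$, of any cell adjacent to $B_y$ from outside; this quantity is measurable with respect to the passage times outside $B_y$. Under $G_y$, the first entry into $B_y$ occurs by time $T_y^{\mathrm{out}} + \epsilon$, so the argument above upgrades to $T_y \leq T_y^{\mathrm{out}} + 6R^d \epsilon$. Setting $L_{\mathrm{easy}} := \{y \in L : T_y^{\mathrm{out}} \leq r - 6R^d\epsilon\}$, we have $G_y \cap \{y \in L_{\mathrm{easy}}\} \subseteq \{y \in P_K\}$. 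A detour estimate gives $T_y^{\mathrm{out}} \leq T_y + O_R(\log r)$ uniformly over the $O(r^d)$ choices of $y$ (routing around $B_y$ traverses $O(R^{d-1})$ cells, and one union-bounds the tail of their passage-time sums); combined with the Cox--Durrett shape theorem and equidistribution of $A(r)$ across cosets of $L$, this yields $|L_{\mathrm{easy}}| \geq c_1 |A(r)|$ with high probability. Conditional on the external passage times, the events $\{G_y\}_{y \in L_{\mathrm{easy}}}$ remain i.i.d.\ Bernoulli$(p_0)$, so a Chernoff bound gives $|P_K| \geq \tfrac{p_0 c_1}{2} |A(r)|$ with high probability. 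The trickiest step is this uniform detour bound: removing a single box of bounded side length must slow the FPP by only a polylogarithmic amount with high probability across polynomially many box positions.
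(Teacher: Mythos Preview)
Your overall strategy matches the paper's: partition space into disjoint boxes indexed by a sublattice, define a local event on the passage times in each box that forces the desired configuration, observe that these events are i.i.d., and verify the deterministic implication that $G_y$ together with $y\in A(r)$ gives $y\in P_K$. Up through that deterministic core your argument and the paper's are essentially identical.

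Where you diverge is in the decoupling step, and this is where the proposal has genuine problems. First, the independence claim does not hold as stated: your $T_y^{\mathrm{out}}$ is defined via FPP on $\Z^d\setminus B_y^\circ$, which still uses the passage times inside every \emph{other} box $B_{y'}$. Hence the random set $L_{\mathrm{easy}}$ is not measurable with respect to any $\sigma$-field independent of the family $\{G_y\}$; conditioning on it biases the $G_{y'}$, and the Chernoff step is unjustified. (Fixing this by redefining $T_y^{\mathrm{out}}$ to avoid \emph{all} boxes is possible only if the sublattice leaves connected corridors, which your choice of index $(R+1)^d$ does not.) Second, both the detour estimate $T_y^{\mathrm{out}}\le T_y+O_R(\log r)$ and the appeal to the Cox--Durrett shape theorem require moment or tail assumptions that are not part of the hypothesis: the theorem only assumes the passage-time distribution puts mass arbitrarily close to $0$ and arbitrarily far out, which permits arbitrarily heavy tails. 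So as written your argument does not establish the stated result.

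The paper avoids all of this machinery. It never tries to certify $x\in A(r)$ by an external estimate; instead it simply counts directly inside the random set $A(r)\cap(R\Z^d+\sigma)$ and asserts that, since the $X_x$ are i.i.d.\ with positive probability, a constant fraction of the indices in this set have $X_x$ occurring. No shape theorem and no detour bound enter. The justification in the paper is brief, but the essential point is that the deterministic implication $X_x\Rightarrow(x\in A(r)\Rightarrow x\in P_K)$ already lets one work entirely within $A(r)$ rather than trying to control $A(r)$ from outside.
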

Applying this to the configurations in Lemma \ref{jump_configs}, framed inside a filled shell with extra white space added so that only the base of the interior configuration touches the shell, we get our desired statement.

The theorem holds, mutatis mutandis, for bond percolation models.
\begin{proof}
    We show that for each $\sigma \in \mathbb{Z}^d$, with high probability a constant proportion of the tiles in $A(r) \cap (R\mathbb{Z}^d+\sigma)$ are in $P_K$.  Since for some $\sigma$,
    \[|A(r) \cap (R\mathbb{Z}^d+\sigma)| \geq  |A(r)|/R^d,\]
    this is sufficient.
    
    Now we look at the disjoint $R$-cubes around each site $x \in A(r) \cap (R\mathbb{Z}^d+\sigma)$.  Write $Q_x=[0,R]^d+(x-x_0)$ and $K_x=K+(x-x_0)$.  Let $\rho_y$ denote the passage time of a site $y$, and let $X_x$ be the event that for all $y \in Q_x$,
    \begin{align*}
        \rho_y &\leq (R+2)^{-d} && \text{if }y \in K_x \\
        1<\rho_y &<2 && \text{if }y=x\\
        \rho_y &\geq 3 && \text{otherwise.}
    \end{align*}
    These times may be scaled based on the passage time distribution to make sure that the probability that $\rho_y$ lands in each range is nonzero.  Clearly all the $X_x$ are i.i.d.\ and each occurs with positive probability.  Therefore there is a constant $c(R)>0$ such that with high probability at least $c(R)|A(r) \cap (R\mathbb{Z}^d+\sigma)|$ of them occur.
    
    Now if $X_x$ occurs, and assuming $Q_x$ does not include the origin, let $r_x$ be the time at which $x$ enters the polyomino.  Then the path connecting the origin to $x$ has to go through the outermost layer of $Q_x$, so sites in that layer enter the polyomino earlier.  Once one point in $K_x$ is in the polyomino, the rest must join it after time $<1$.  The first point adjacent to $x$ joins at some time in $(r_x-2,r_x-1)$.  One sees therefore that all sites in $K_x$ must enter the polyomino at times in $(r_x-3,r_x)$, and that all sites in $Q_x \setminus (K_x \cup \{x\})$ must enter after $x$ does.  Thus every $x$ for which $X_x$ occurs is in $P_K$.
\end{proof}

Finally, we prove Theorem \ref{condChanges}, which states that under the assumption that  there is a $C(d)>0$ so that $P_d(t)\leq C(d) t^{(d-1)/d}$ with high probability, the probability of each specific change in $\beta_i$ occuring at time $t$ is asymptotically bounded away from zero, again with high probability.

By Theorem \ref{outermost} the perimeter contains $\geq c(d)t^{(d-1)/d}$ sites at time $t$ whose neighborhood looks like a given one of the configurations from Lemma \ref{jump_configs} with high probability.  Therefore, whenever $P_d(t) \leq C(d)t^{(d-1)/d}$, the probability that the next tile is added in the center of such a configuration is at least $c(d)/C(d)$.


\section{Computational experiments and open problems} \label{sec:comp}
Theorem \ref{thm_conditional} shows a rigorous asymptotic bound for the Betti numbers of the Eden growth model in $d$ dimensions. However, many finer questions about the associated geometry and topology remain open.  In this section, we investigate several of these questions via computational experiments for the Eden model in dimensions 2 through 5, giving evidence for Conjectures \ref{edenexp} and \ref{conj:bettiJumps} as stated in Section~\ref{S:main_results} and suggesting further conjectures.

The Eden Growth Model was implemented in Python, together with an algorithm that tracks the behavior of the $d-1$-dimensional homology at each timestep. We find a basis for $H_{d-1}\paren{A\paren{t}}$ via Alexander duality by identifying the bounded components of the complement and tracking how they change over time.  This implementation allows us to study fine questions about the distribution of shapes and area of the holes in the EGM in Section \ref{S:Compu_areasandshapes}. In Section \ref{S:Comp_per}, we also compute the proportion of the site perimeter contained in the unbounded component of the complement (the \emph{outer perimeter}) for clusters of sizes $1$ and $2$ million for the EGM in dimension two and clusters of size $1.5$ million for the EGM in dimensions 2 through 5. The data analyzed in Tables~\ref{table:perimeter}, \ref{table:areas}, and \ref{table:shapes}, and  Figures~\ref{fig:areas}, and \ref{F:max_hole} comes from a single set of 10 two-dimensional clusters of size $1$ million. This dataset has been made publicly available at the GitHub repository \cite{EdenSoftware}. 

The algorithm described in the previous paragraph cannot easily be modified to measure the local geometry associated with the lower-dimensional homology.\footnote{In this setting, the representative cycles of homology group elements are no longer unique, but intuitively one would like to choose and measure the ``smallest'' or ``tightest'' representative of each hole. This problem was recently studied for simplicial complexes in \cite{obayashi2018volume} and similar tools and techniques could be adapted and implemented for measuring the geometry associated to the homology of cubical complexes.} Instead, we use the Perseus software package~\cite{mischaikow2013morse,nanda2012perseus} to compute the Betti numbers and persistent homology in all dimensions. These computations are discussed in Sections \ref{S:Comp_betti} and \ref{S:Comp_persistent}. Unsurprisingly, this was slower than our other computations, and we include data from a single cluster of one million tiles for the two-dimensional EGM, and data from single clusters of size five hundred thousand for the EGM in dimensions three, four, and five.


\subsection{Total, inner, and outer perimeter} \label{S:Comp_per}

In applications of stochastic growth models (e.g.~to modeling a bacterial cell colony), the interaction with the medium takes place along the perimeter. These interactions may be qualitatively different for sites in the \emph{outer perimeter} (those that are contained in the unbounded component of the complement, where resources are unlimited) and sites in the \emph{inner perimeter} (those that are contained in the holes of the EGM). Top-dimensional holes can be thought of as \textit{capsules} whose contents cannot interact with the outside medium. In what follows, we analyze the total, inner and outer site perimeter of simulations of the Eden model in dimensions 2 through 5.

We remind the reader that the site perimeter of a $d$-dimensional polyomino $A$ is the set of $d$-cells that are not in $A$ but that have $(d-1)$-cells in common with $A$.

Table \ref{T:2dper} shows the mean and sample standard deviation of the sizes of the total, inner, and outer site perimeter for a sample of 10 simulations of the two-dimensional EGM, at sizes $10^5$ and $10^6$. From this data, we observe that the relative proportions of the inner and outer perimeters already appear to have stabilized by time $10^6$. This is further supported by data from two larger single cluster simulations of sizes $1.5$ million and $2$ million, for which $\operatorname{OutP}_2(t)/P_2(t)$ remains between $0.77$ and $0.80$ in all measurements taken once every $10^5$ timesteps. Thus, we conjecture that
\begin{equation} \label{OutP_bounds}
    0.77 \leq \frac{\operatorname{OutP}_2(t)}{P_2(t)} \leq 0.80
\end{equation}
with high probability as $t\rightarrow\infty$.

\begin{table}
\caption{Statistics of the total, inner, and outer site perimeters of a sample of 10 simulations of the two-dimensional Eden growth model up to times $10^5$ and $10^6$.}
\label{T:2dper}
    \centering
\begin{tabular} { c | c|c c || c c }
\multicolumn{2}{c|}{} & \multicolumn{2}{c||}{At time $10^5$:} & \multicolumn{2}{c}{At time $10^6$:}\\
\multicolumn{2}{c|}{} & Sample SD & Mean & Sample SD & Mean\\
\hline
\hline
\multicolumn{2}{c|}{Total (site) perimeter} & 29 & 2353 & 77    &  7594  \\
\hline
As fraction & Outer perimeter & 0.0114 & 0.7931 &  0.0068 &  0.7839  \\
\cline{2-6}
of total & Inner perimeter& 0.0114 &  0.2068 & 0.0068 & 0.2110 \\
\hline
\end{tabular}
\label{table:perimeter}
\end{table}

For the Eden model in dimensions 3 through 5, we performed the same computations for single clusters of size 1.5 million. Unlike the two-dimensional EGM, $\operatorname{OutP}_d(t)/P_d(t)$ was strictly decreasing for observations taken at evenly spaced intervals of $10^5$ timesteps. As such, we do not think that $\operatorname{OutP}_d(t)/P_d(t)$ has stabilized at time $t\leq 1.5 \times 10^6$ for $d=3$, $4$, or $5$ (see Table \ref{T:per_2-5D}). This is unsurprising given that the diameter of the clusters, in any sense, is proportional to $t^{1/d}$. Thus it is computationally infeasible to collect enough data to make reasonable conjectures about the limiting value of $\operatorname{OutP}_d(t)/P_d(t)$.  Nevertheless, we make the following conjecture:

\begin{table}
\caption{The total, inner, and outer perimeter and the diameter of one simulation in each dimension between 2 and 5 up to time 1.5 million.  The diameter given here is the sidelength of the smallest cube containing the polyomino; this behaves similarly to other possible notions of diameter.}
\label{T:per_2-5D}
    \centering
\begin{tabular} { c | c|c | c |c |c  }

\multicolumn{2}{c|}{} & 2D & 3D &4D & 5D\\
\hline
\hline
\multicolumn{2}{c|}{Total (site) perimeter} & 9,287 & 200,401 & 986,603 & 2,573,547 \\
\hline
As fraction & Outer perimeter & 0.7811 & 0.8150 & 0.9311 & 0.9950\\
\cline{2-6}
of total & Inner perimeter& 0.2188 & 0.1850 & 0.0689 & 0.005\\
\hline
\multicolumn{2}{c|}{Diameter} & 1424 & 165 & 64 & 40\\
\hline
\end{tabular}
\end{table}

\begin{conj} \label{C:Out}
    For each $d>1$, there is a number $\operatorname{per}_{d}>0$ so that
    \begin{equation} \label{OutPlimit}
        \frac{\operatorname{OutP}_d(t)}{P_d(t)} \rightarrow \operatorname{per}_{d}
    \end{equation}
    almost surely as $t\rightarrow\infty$.  Moreover,
    $\lim_{d \to \infty} \operatorname{per}_{d} = 1$.
\end{conj}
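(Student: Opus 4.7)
The plan is to handle the two parts of Conjecture \ref{C:Out} separately. Part 1 (existence of the limit $\operatorname{per}_d$) is essentially a strong-law statement for an empirical ratio of near-local boundary observables, while Part 2 ($\operatorname{per}_d \to 1$) should follow from the increasing rarity of voids in high dimensions.

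For Part 1, I would combine the Cox--Durrett shape theorem with an ergodic argument for the Eden process seen near its boundary. After rescaling by $t^{-1/d}$, the cluster converges to the deterministic convex shape $B_d$; at each boundary point $p \in \partial B_d$, one expects the microscopic process seen from the lattice point nearest $t^{1/d}p$ to converge in distribution to a stationary ``half-space Eden process'' whose boundary is the supporting hyperplane of $B_d$ at $p$. Ergodicity of this half-space process would yield convergence of the empirical density of any prescribed \emph{local} configuration on the perimeter. The obstacle is that being in the outer perimeter is not a local property---it depends on connectedness to infinity in the complement. I would circumvent this by defining $\operatorname{OutP}_d^{(k)}(t)$ to count perimeter sites connected through a path of $\leq k$ empty cells to the exterior of a $(1+\epsi)$-dilation of $A(t)$; prove monotone convergence $\operatorname{OutP}_d^{(k)}(t)/P_d(t) \to \operatorname{OutP}_d(t)/P_d(t)$ uniformly in $t$ (which requires a tail bound on void diameters); and then apply the ergodic argument for each fixed $k$.

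For Part 2, I would first show $\mathbb{E}[\operatorname{InP}_d(t)]/\mathbb{E}[P_d(t)] \to 0$ as $d \to \infty$ and then upgrade to almost-sure convergence via concentration. A cell lies in the inner perimeter only if it sits in a void, which requires the cluster to have ``enclosed'' it by filling a topological $(d-1)$-sphere of cells around it before growth reached the enclosed region from the far side. A Peierls-type enumeration over possible enclosing surfaces, paired with the FPP formulation to quantify the probabilistic cost per enclosing cell of arriving in a specific order relative to its neighbors, should yield $\mathbb{P}(x \in \operatorname{InP}_d(t) \mid x \in \text{site perimeter}) \leq C^{-d}$ for some $C>1$; dividing $P_d(t) = \operatorname{OutP}_d(t) + \operatorname{InP}_d(t)$ then delivers the limit. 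The main obstacle lies in Part 1: the ergodicity of the half-space Eden process and the existence of a ``local limit near the boundary'' for the finite-time process are delicate and, as far as I know, not established in the FPP literature for observables more refined than those addressed by the shape theorem; moreover one would likely need, as input, the still-open conjectured bound $P_d(t) \asymp t^{(d-1)/d}$ with high probability.
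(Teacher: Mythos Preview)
The statement you are addressing is stated in the paper as a \emph{conjecture}, not a theorem; the paper gives no proof, only numerical evidence from simulations in dimensions $2$ through $5$ (Tables~\ref{T:2dper} and~\ref{T:per_2-5D}). So there is no argument in the paper to compare your proposal against.

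That said, your proposal is an honest outline, not a proof, and you have correctly identified the real obstacles. For Part~1, the ``local limit near the boundary'' and the ergodicity of a half-space Eden process are precisely the kinds of results that are \emph{not} available in the FPP literature; the shape theorem controls only macroscopic quantities, and fluctuation exponents for the boundary remain conjectural even in $d=2$. Your truncation scheme $\operatorname{OutP}_d^{(k)}$ is reasonable, but the uniform-in-$t$ convergence you need is itself nontrivial: it asks for a tail bound on void diameters that is uniform over all boundary locations, which the paper's Theorem~\ref{outermost} does not supply (it gives lower bounds on the \emph{count} of local configurations, not upper bounds on their size). For Part~2, the Peierls argument is plausible in spirit, but note that the enclosing surface has $\Omega(d)$ cells even for a single-site void, and the number of such surfaces grows like $d^{\Theta(d)}$; you would need the per-cell probabilistic cost to beat this entropy, which is not obvious from the FPP formulation alone. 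In short, your sketch is a sensible roadmap toward the conjecture, but each of its main ingredients is itself an open problem, consistent with the paper's decision to leave the statement as a conjecture.
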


\subsection{Betti numbers}\label{S:Comp_betti}
In this section, we examine the asymptotics of the Betti numbers as well as the change in each Betti number at a single timestep. As mentioned before, the computations of the Betti numbers contained in this section were performed using the Perseus software package~\cite{mischaikow2013morse,nanda2012perseus}. Data for the two-dimensional Eden model comes from a single cluster of size one million, and data for dimensions three, four, and five are from single clusters of size 500,000.

\begin{figure}
    \centering
        \subfloat[]{\label{fig:changes_2D1}\includegraphics[width=0.45\textwidth]{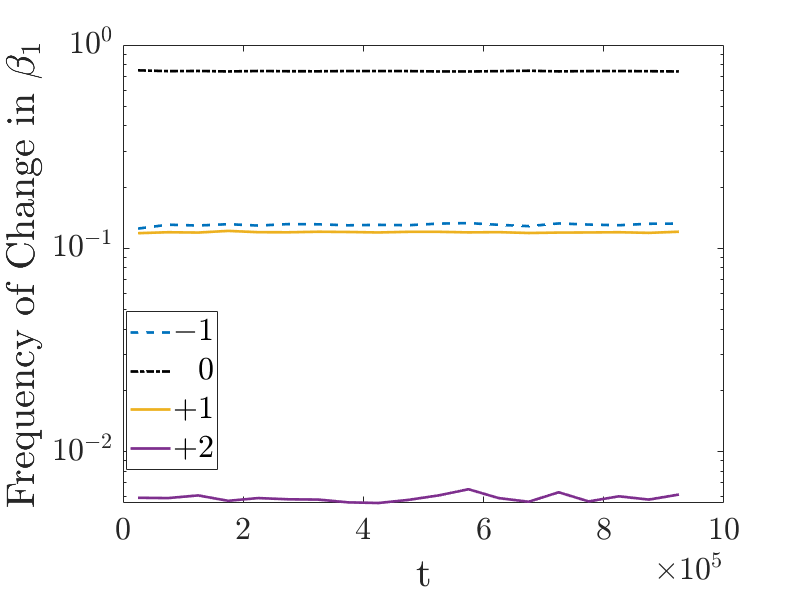}}
    \quad
        \subfloat[]{\label{fig:changes_3D1}\includegraphics[width=0.45\textwidth]{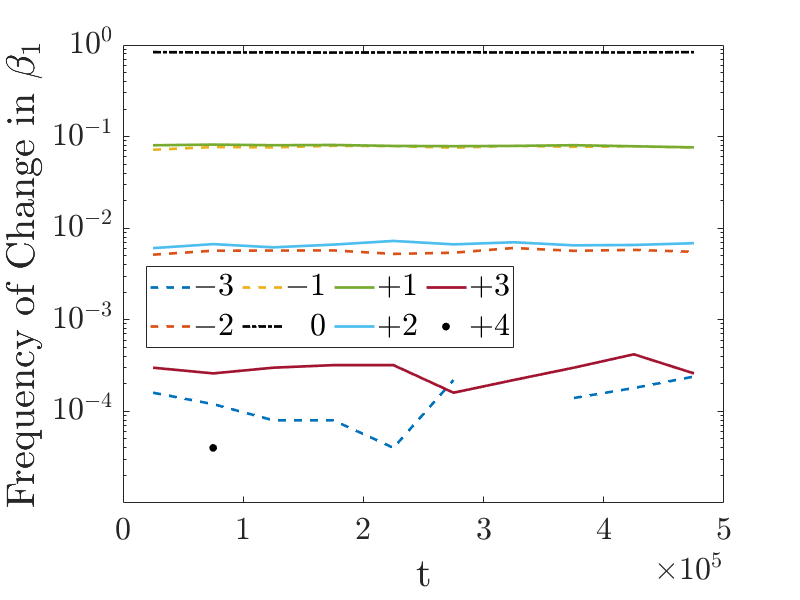}}
    
        \subfloat[]{\label{fig:changes_3D2}\includegraphics[width=0.45\textwidth]{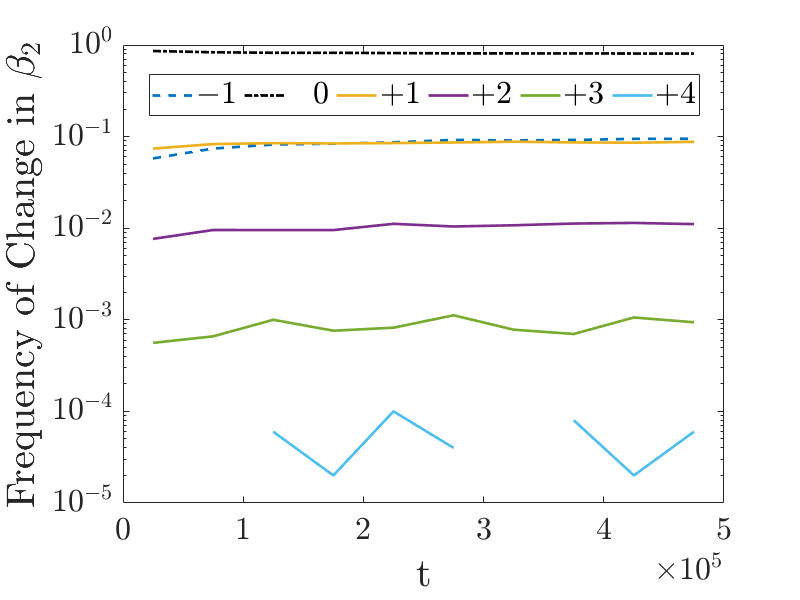}}
    \quad
        \subfloat[]{\label{fig:changes_4D1}\includegraphics[width=0.45\textwidth]{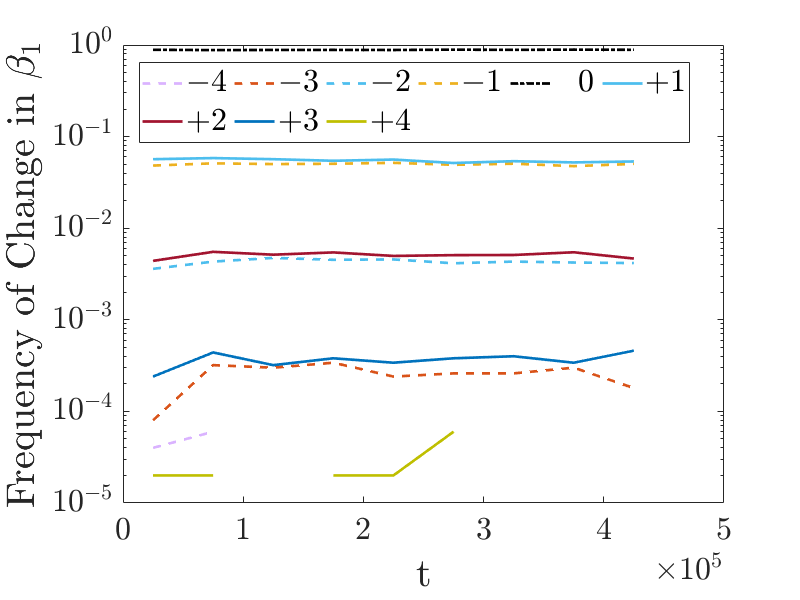}}
    
        \subfloat[]{\label{fig:changes_4D2}\includegraphics[width=0.45\textwidth]{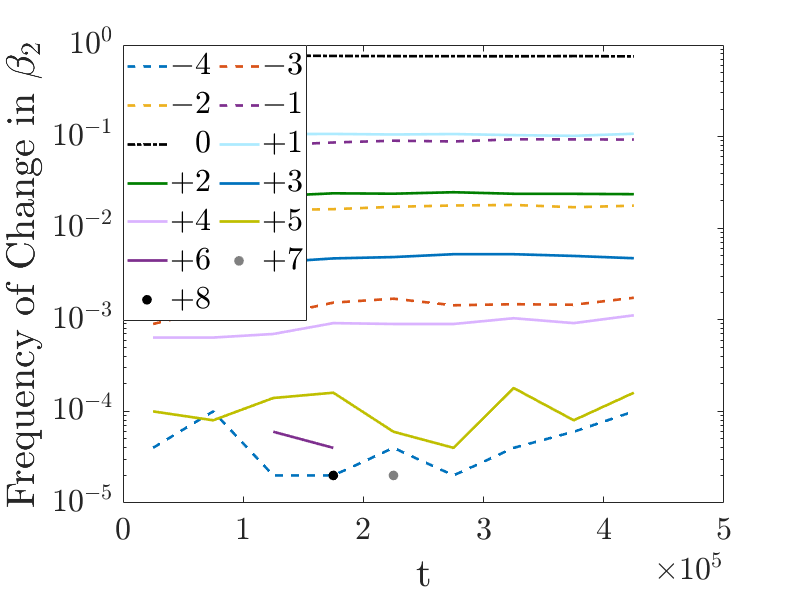}}
    \quad
        \subfloat[]{\label{fig:changes_4D3}\includegraphics[width=0.45\textwidth]{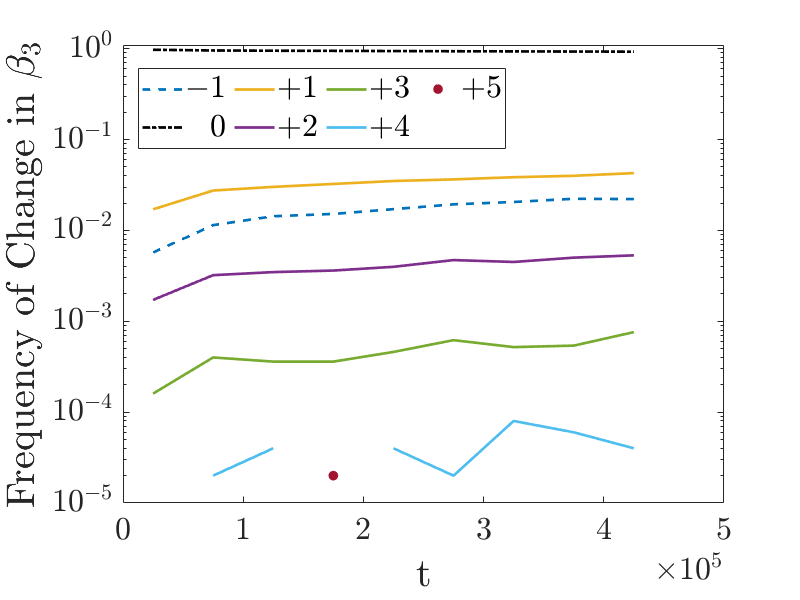}}
   
    \caption{Frequency with which each change in the Betti number in one timestep occurs in the (a) two-dimensional, (b)-(c) three-dimensional, and  (d)-(f) four-dimensional Eden model. Frequencies were averaged over bins of width 50,000 timesteps. This data provides strong evidence for Conjecture \ref{conj:bettiJumps}. Changes that occur in only one bin (e.g.~$\Delta\beta_1=+4$ in (b)) are shown by a dot instead of a line.
    }
    \label{fig:changeBetti}
\end{figure}

Figure \ref{fig:changeBetti} shows the frequencies of the event that a Betti number changes by a given amount in a single timestep. The frequencies of each event appear to converge quite quickly, providing strong evidence for Conjecture \ref{conj:bettiJumps}. Unsurprisingly, small jumps are much more frequent than large jumps. This is related to the closeness of the frequencies of $\beta_i$ increasing by one and decreasing by one in Figures~\ref{fig:changes_2D1}--\ref{fig:changes_4D2}: the total Betti number grows more slowly than the number of timesteps, so the number of positive changes balances out the number of negative changes, with an error term growing more slowly than $t$ (at a rate between $t^{d-1}/d$ and $P_d(t)$, by Theorem \ref{thm_conditional}). We expect this behavior to also occur for $\beta_3$ in the four-dimensional case, at larger values of $t$ than pictured in Figure \ref{fig:changes_4D3}. We provide more evidence below that statistics for this case have not yet stabilized. On the other hand, this heuristic does not explain the striking alignment in the frequency of events where $\beta_1$ changes by $+i$ and $-i$ in Figures~\ref{fig:changes_3D1} and~\ref{fig:changes_4D1}.

\begin{figure}
    \centering
        \subfloat[]{\label{fig:betti_2D}\includegraphics[width=0.48\textwidth]{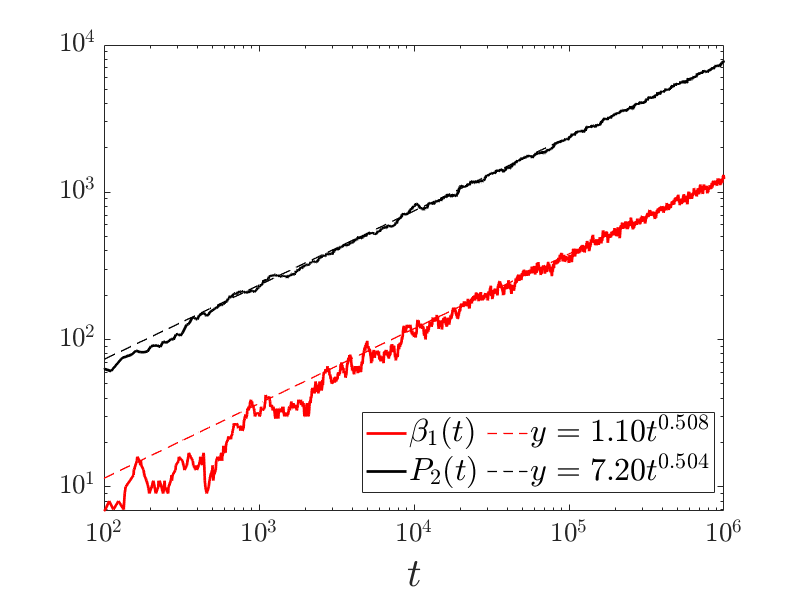}}
    \quad
        \subfloat[]{\label{fig:betti_3D}\includegraphics[width=0.48\textwidth]{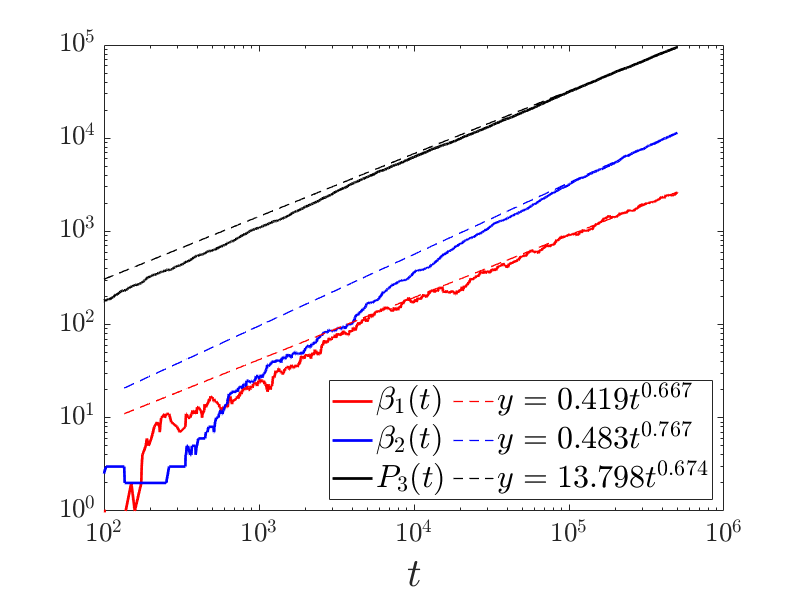}}
    
        \subfloat[]{\label{fig:betti_4D}\includegraphics[width=0.48\textwidth]{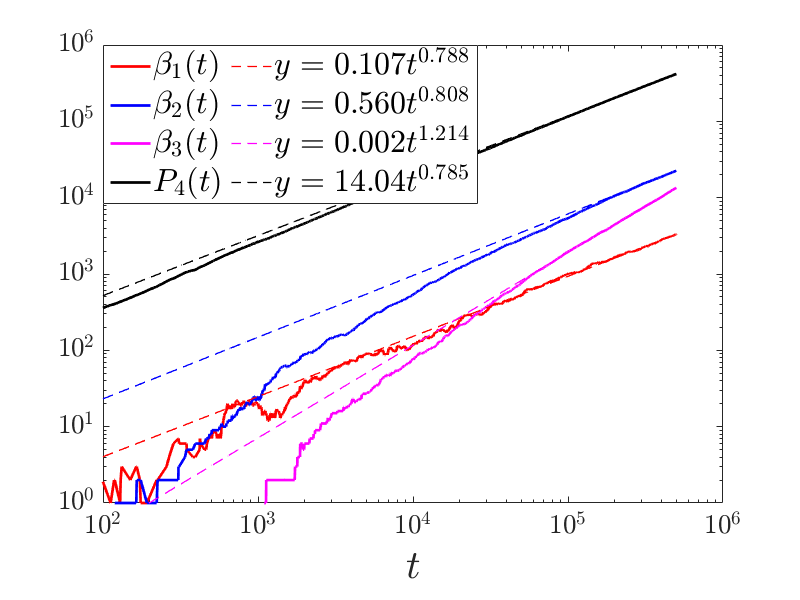}}
    \quad
        \subfloat[]{\label{fig:betti_5D}\includegraphics[width=0.48\textwidth]{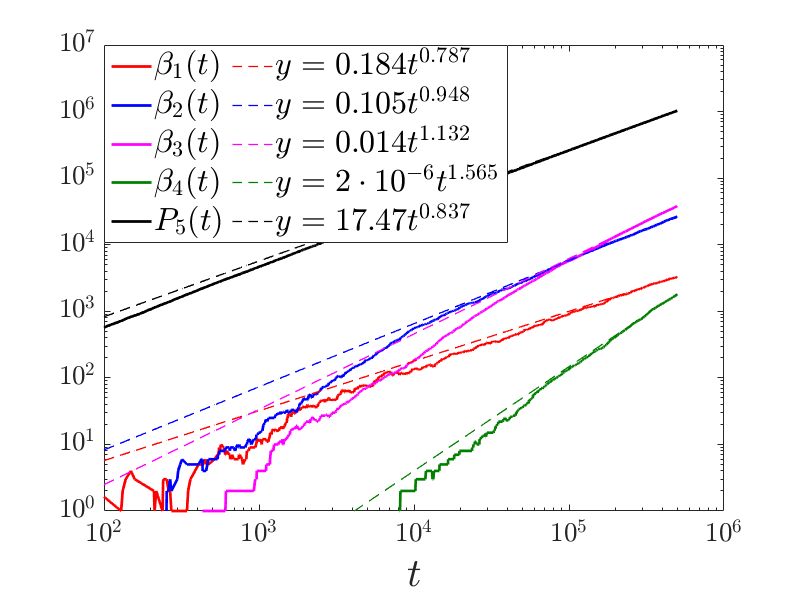}}

    \caption{The evolution of the Betti numbers and perimeter over time in the (a) two-dimensional, (b) three-dimensional, (c) four-dimensional, and (d) five-dimensional Eden models. Power laws were fitted in MATLAB.
    }
    \label{fig:bettiAsymptotics}

\end{figure}

The evolution of the Betti numbers over time are shown in Figure \ref{fig:bettiAsymptotics}, together with the perimeter. If $P_d(t)\sim t^{(d-1)/d}$ as conjectured, Theorem \ref{thm_conditional} would imply that $\beta_i(t)$ also scales as $t^{(d-1)/d}$. To test this, we fitted power laws to the Betti curves in MATLAB. Estimated exponents are relatively close to their conjectured values for $\beta_1$ for the Eden model in dimensions 2 through 5. Notably, $\beta_3$ in the four-dimensional Eden model and $\beta_3$ and $\beta_4$ in the five-dimensional Eden model are growing much faster than expected, at a rate exceeding that of the volume. We take this as further evidence that statistics have not stabilized in this case.

Another interesting trend in three and four dimensions is that the $\beta_i$ for small $i$ starts out larger at the beginning and is overtaken by $\beta_j$ for large $j$ as time goes on. Recall from Conjecture \ref{edenexp} that $C_{i,d}$ is the conjectured limit of $\beta_{i}(t)/t^{\frac{d-1}{d}}$  as $t\rightarrow\infty$. This data suggests a further conjecture.

\begin{conj}\label{conj:overtake}
For $0<i<j<d$, $C_{i,d}>C_{j,d}$.
\end{conj}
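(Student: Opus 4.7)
The plan is to derive Conjecture~\ref{conj:overtake} from two ingredients: a local-limit description of the interface, and a strict monotonicity statement for the resulting local Betti contributions. First, I would strengthen the picture behind Conjecture~\ref{edenexp} by showing that, as $t\to\infty$, the distribution of the intersection of $A(t)$ with a cube of fixed sidelength $R$ centered at a uniformly chosen site of the site perimeter converges weakly to some probability measure $\mu_d^{(R)}$ on $R$-local configurations. This is an ergodic-type statement for the Eden interface; while strongly motivated by the KPZ picture and by the Cox--Durrett shape theorem, it is itself open.

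Next, given a sidelength $R$ large enough that the local configurations of Lemma~\ref{jump_configs} fit inside it, I would introduce a local functional $\phi_i$ on $R$-configurations such that
\[
    \beta_i(t) \;=\; \sum_{p\,\in\,\mathrm{perim}(A(t))} \phi_i(\mathrm{nbd}_R(p)) \;+\; o\bigl(t^{(d-1)/d}\bigr).
\]
A natural choice uses the filtration $A(1)\subset A(2)\subset\cdots$ as a discrete Morse function: by the long exact sequence argument in Section~\ref{sec:otherproof}, $\phi_i(Q)$ is the change in $\beta_i$ caused by inserting the distinguished central tile of $Q$ into the surrounding $R$-configuration, which is local and bounded. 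Combined with the local limit and the hypothesis $P_d(t)/t^{(d-1)/d}\to c_d$, this yields $C_{i,d}=c_d\int\phi_i\,d\mu_d^{(R)}$.

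The final step is to prove $\int\phi_i\,d\mu_d^{(R)}>\int\phi_j\,d\mu_d^{(R)}$ for $i<j$. The intuition is combinatorial: generating a class in $H_j$ locally requires assembling at least $2^{d-j}\binom{d}{j}$ $j$-cells in a noncontractible arrangement, whereas a generator of $H_i$ with $i<j$ can be built from a smaller and more flexible family of patterns (as already visible in Lemma~\ref{jump_configs}, where the bound $2^{d-1-i}\binom{d-1}{i}$ is strictly decreasing in $i$ for $i\le (d-1)/2$, and in the appropriate range for larger $i$ by a dual construction). A pairing that turns each $j$-generating patch into an $i$-generating one, together with an explicit positive-$\mu_d^{(R)}$-measure family of $i$-generating configurations with no $j$-counterpart, should give the strict inequality.

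The main obstacle is the ergodic theorem of the first step: a local limit for the Eden interface is not known, and is in fact strictly stronger than the conjectured scaling $P_d(t)\asymp t^{(d-1)/d}$ with high probability, which itself is open. Even granting it, step three is delicate because $\phi_i$ is signed (adding a tile can kill an $i$-cycle as well as create one), so one must compare expected positive contributions against expected cancellations. A robust workaround is to split $\phi_i=\phi_i^{+}-\phi_i^{-}$ according to sign and establish monotonicity in $i$ for each piece separately, using that the combinatorial hierarchy governing the dimensions of possible local cycles applies both to births and to deaths.
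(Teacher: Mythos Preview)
This statement is labeled a \emph{conjecture} in the paper, and the paper does not attempt to prove it. The only support offered is the computational data in Section~\ref{sec:comp} (the Betti curves in Figure~\ref{fig:bettiAsymptotics}) together with a one-line heuristic about the relative birth and persistence rates of classes in different dimensions. There is therefore no proof in the paper to compare your attempt against; even the weaker Conjecture~\ref{edenexp}, which your argument presupposes, remains open.

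What you have written is a research program rather than a proof, and you acknowledge as much: the local-limit theorem for the Eden interface in your first step is strictly stronger than the open perimeter-scaling conjecture, and the strict monotonicity in your third step is asserted but not established. Two further points are worth flagging. First, the combinatorial heuristic you invoke is not sound as stated: the quantity $2^{d-1-i}\binom{d-1}{i}$ is the number of $i$-faces of a $(d-1)$-cube and is unimodal in $i$, not monotone (for $d=4$ it reads $8,12,6,1$), so it cannot by itself order the $C_{i,d}$. Second, your birth-side argument (``higher-dimensional generating patches are rarer'') is only half the story; the paper's own heuristic is precisely that higher-dimensional classes persist longer because there are fewer ways to kill them, and the sign of $C_{i,d}-C_{j,d}$ depends on the competition between these two effects. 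Splitting $\phi_i$ into positive and negative parts does not resolve this without quantitative control on each piece, which you do not supply.
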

As we will see in the next section using persistent homology, a heuristic explanation for this behavior is that while higher dimensional homology classes form more infrequently than lower-dimensional ones, they last for much longer.

\subsection{Persistent Homology} \label{S:Comp_persistent}
When $\beta_i(t)$ changes, one would like to associate this with a specific geometric feature of $A(t)$ (an ``$i$-dimensional hole'') that forms or disappears at time $t$. In general, it is impossible to single out a specific such feature, as this requires a choice of basis for the $i$-dimensional homology and there are many reasonable choices (though the situation is clearer in codimension one, as we will see in the next section). However, there is a well-defined pairing between the events where an $i$-dimensional homology class is born and $\beta_i$ increases and the events where an $i$-dimensional homology class dies and $\beta_i$ decreases. This can be found using persistent homology.

Persistent homology~\cite{edelsbrunner2000topological} tracks the birth and death of homology generators over time. More precisely, if $X_1\hookrightarrow X_2 \hookrightarrow \cdots \hookrightarrow X_n$ is a filtration of topological spaces (that is, a sequence of topological spaces where each is a subset of the next), the $i$-dimensional persistent homology intervals $PH_i(\mathcal{X})$ are the unique set of half-open intervals $\left\{\left[\mathbf{b}_l,\mathbf{d}_l\right)\right\}$ with endpoints in $\left\{1,\ldots,n\right\}$ so that
\[\text{rank}\left(H_i\left(X_j\right)\rightarrow H_i\left(X_k\right)\right)=\#\left\{I\in PH_i(\mathcal{X}):\left[j,k\right]\subset I\right\}\,.\]
Compatible bases can be chosen for the homology groups $H_i\left(X_j\right)$ so that an interval $\left[\mathbf{b},\mathbf{d}\right)$ corresponds to a homology basis element that is born in $H_i(X_{\mathbf{b}})$, is mapped forward to basis elements in $H_i(X_j)$ for $\mathbf{b}<j<\mathbf{d}$, and dies in $H_i(X_{\mathbf{d}})$. Note that the choice of basis elements is not unique. For a more in depth introduction to persistent homology that describes further algebraic structure see, for example,~\cite{edelsbrunner2008persistent,chazal2016structure}.

Here, we compute the persistent homology of the natural filtration of the Eden growth model through time $A(1)\hookrightarrow A(2)\hookrightarrow\cdots \hookrightarrow A(t-1) \hookrightarrow A(t)$. This allows us to measure how long a homology class persists after it is born.

We first give a heuristic estimate for the expected persistence.  First, note that the persistence in first-passage perolation time of an element of $H_{d-1}(A(t))$ corresponding to a hole with one tile is exponentially distributed with mean $1.$ We claim the expectation scales as $t^{(d-1)/d}$ in Eden time.  To compute the expectation in Eden time, we need to estimate the expected difference in Eden time, i.e.\ volume, from $A(r)_{\mathrm{FPP}}$ to $A(r+s)_{\mathrm{FPP}}$, using this notation for the polyomino at FPP time $r$ and $r+s$, respectively.  For $s>>\sqrt{r}$, known convergence estimates for the shape theorem imply that this scales as $(r+s)^d-r^d$.  For smaller $s$, we use a heuristic.  We assume that as $u$ goes from $r$ to $2r$, $\mathbb{E}(|A(u+s)_{\mathrm{FPP}}|-|A(u)_{\mathrm{FPP}}|)$ changes at most by a multiplicative constant independent of $r$.  By splitting the interval $[r,2r]$ into smaller intervals and using the consequence of the shape theorem above,
\[\sum_{n=0}^{\lceil r/s \rceil} \mathbb{E}(|A(r+(n+1)s)_{\mathrm{FPP}}|-|A(r+ns)_{\mathrm{FPP}}|) \sim (2r)^d-r^d.\]
Dividing out and using our assumption, we get $\mathbb{E}(|A(r+s)_{\mathrm{FPP}}|-|A(r)_{\mathrm{FPP}}|) \sim sr^{d-1}$.  Integrating over $s$ with respect to the exponential distribution to get the expected Eden time, we see that the expected persistence of a hole with one tile scales as $t^{(d-1)/d}$, where $t \sim r^d$ is the Eden time, similar to the expected perimeter.  One might guess that the persistence of larger holes and holes of other dimensions follows a similar law; this is also suggested by our data.
\begin{figure}
    \centering
        \subfloat[]{\label{fig:PH_2D}\includegraphics[width=0.48\textwidth]{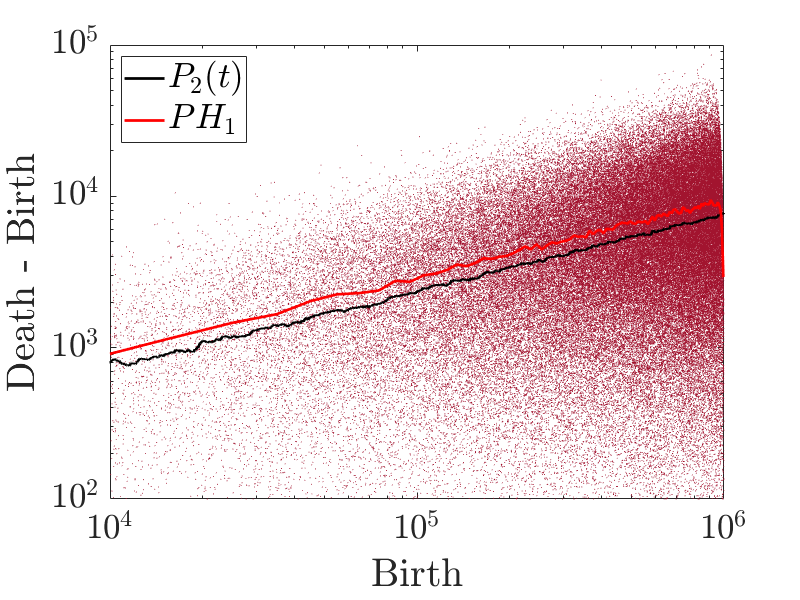}}
    \quad
        \subfloat[]{\label{fig:PH_3D}\includegraphics[width=0.48\textwidth]{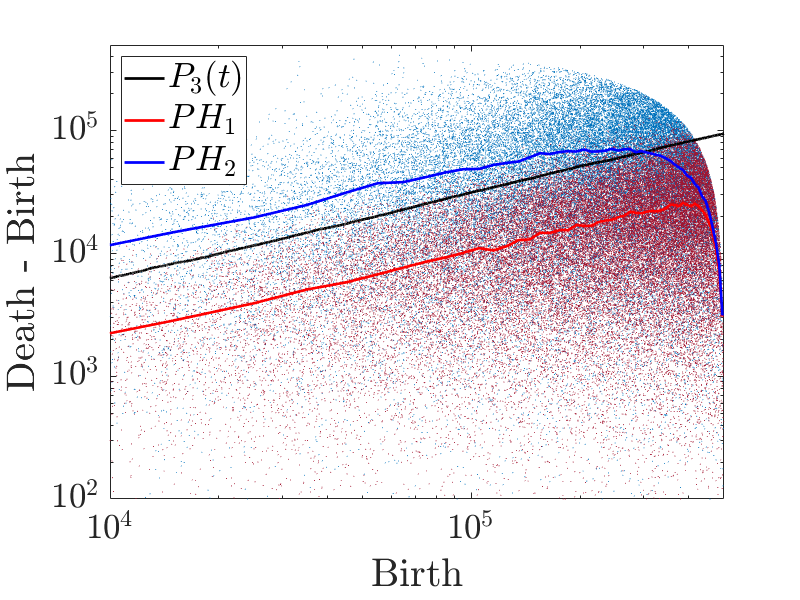}}
    
        \subfloat[]{\label{fig:PH_4D}\includegraphics[width=0.48\textwidth]{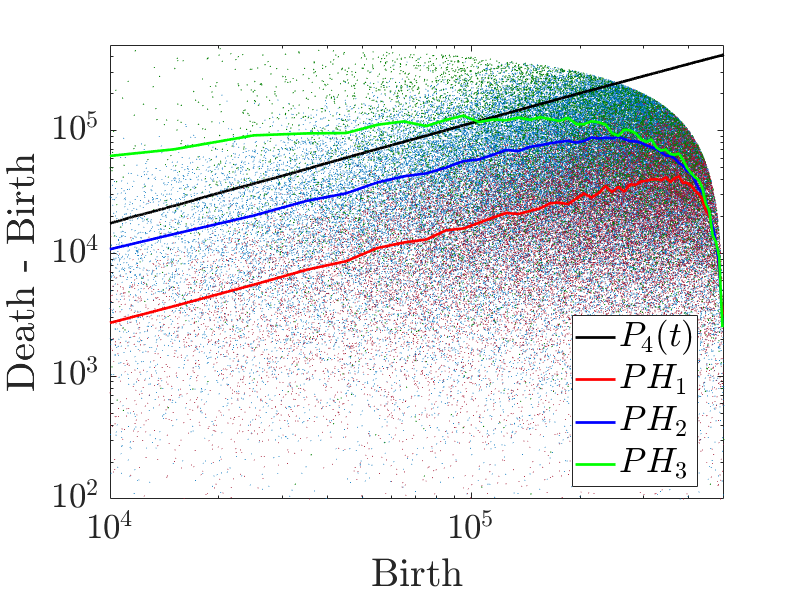}}
    \quad
        \subfloat[]{\label{fig:PH_5D}\includegraphics[width=0.48\textwidth]{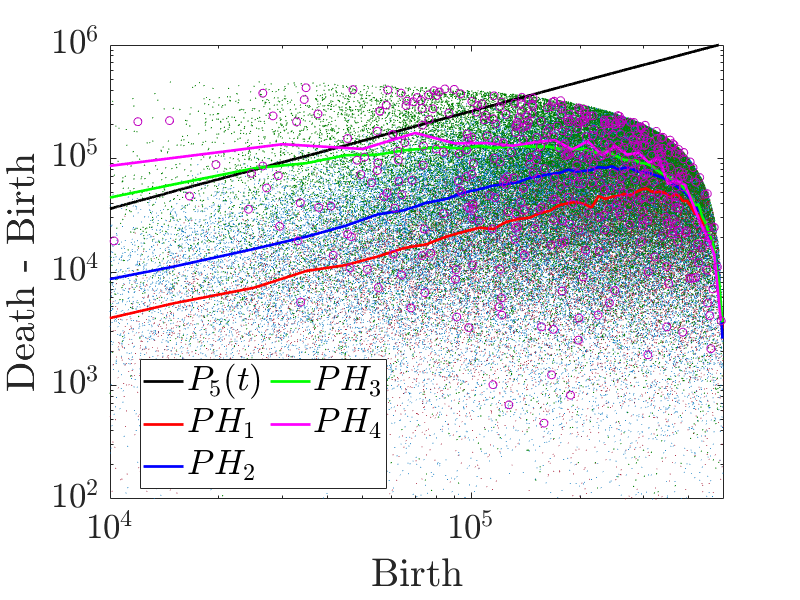}}

    \caption{Persistent homology scatter plots for the (a) two-dimensional, (b) three-dimensional, (c) four-dimensional and (d) five-dimensional Eden models The dropoff on the right of the figures is due to finite size effects. We used the Perseus software package~\cite{mischaikow2013morse,nanda2012perseus} to compute persistent homology.
    }
    \label{fig:PHPlots}
\end{figure}

The persistent homology data for the Eden model in dimensions 2--5 is shown in Figure \ref{fig:PHPlots}. While persistent homology is usually plotted in a scatter plot of birth versus death, we plot the birth versus the persistence to see how the expected persistence of a homology class changes over time. The scatter of points shows all intervals seen in the simulation, and the solid lines give an estimate of the average persistence of an interval with the given birth time. Note that the drop-off in the distribution of the deaths to the left of the plot is an artifact of the finite size of the simulation.  In all cases, higher-dimensional homology classes persist longer on average than lower-dimensional ones. This is unsurprising, as a homology class in $H_{d-1}$ can be killed only by adding specific tiles, but there are more ways to to kill lower-dimensional classes. On the other hand, there are more intervals for each dimension below $d-1$ (for example, for the four-dimensional EGM there are $3.4 \times 10^4$, $8.7\times 10^4$, and $2.2\times 10^4$ intervals in dimensions $1$, $2$, and $3$, respectively). These two trends explain the behavior we observed in Figure \ref{fig:bettiAsymptotics}, where the higher-dimensional Betti curves start below the lower-dimensional ones and then overtake them as time goes on: while fewer high-dimensional classes are born, they last much longer. Furthermore, most of the curves appear linear and parallel with $P_d(\mathbf{b})$ for a wide range, suggesting they follow a power law with the same exponent as $P_d(\mathbf{b})$.

\begin{figure}
    \centering
        \subfloat[]{\label{fig:PHist_3d}\includegraphics[width=0.48\textwidth]{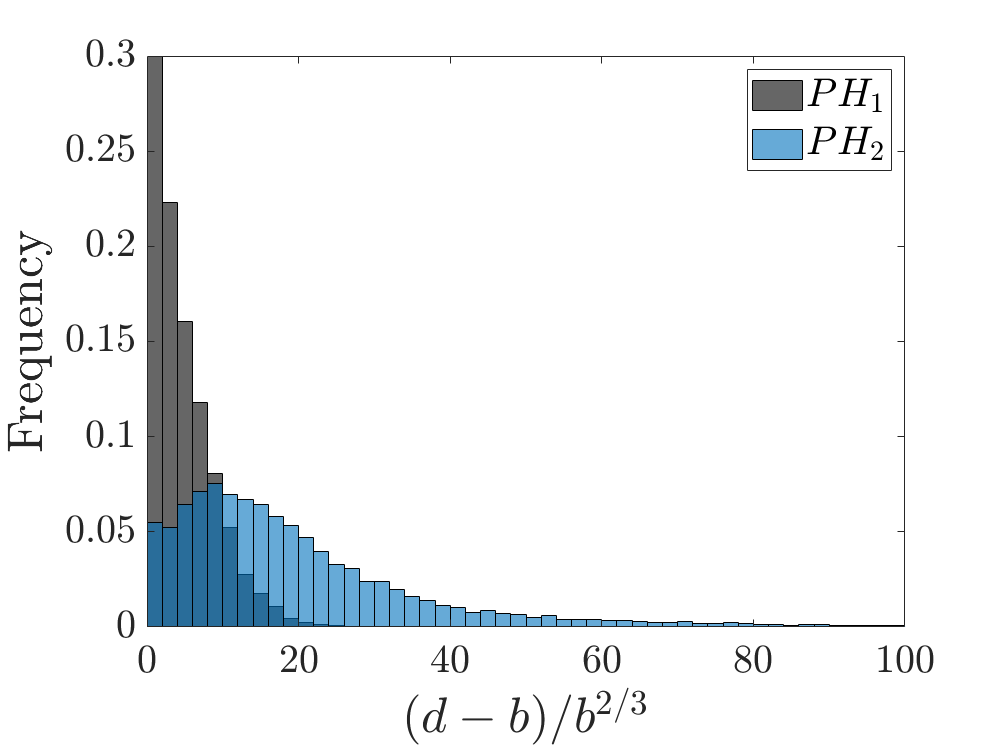}}
    \quad
        \subfloat[]{\label{fig:PHist_4d}\includegraphics[width=0.48\textwidth]{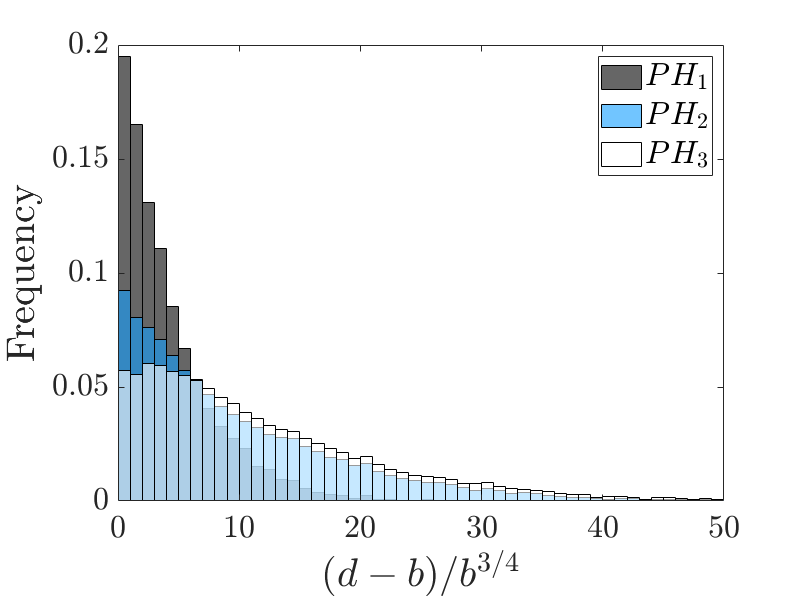}}
    \caption{Empirical distributions of the normalized persistence for the persistent homology of the Eden model in (a) three dimensions and (b) four dimensions. See the text for more details.
    }
    \label{fig:PHHistogram}

\end{figure}

The data in Figure \ref{fig:PHPlots} suggests that the expected persistence of an interval born at time $\mathbf{b}$ scales as the perimeter, which is believed to scale as $\mathbf{b}^{(d-1)/d}$. One might also suspect that the distribution of the normalized quantity $(\mathbf{d}-\mathbf{b})/\mathbf{b}^{(d-1)/d}$ converges as the birth time is taken to $\infty$. The empirical distribution of this normalized persistence is shown for Eden models in three and four dimensions in Figure \ref{fig:PHHistogram}. The figure includes data for intervals with birth times between $t=1 \times 10^5$ and $t=2\times 10^5$; the upper cutoff was chosen so only a small percentage of intervals born before that time persisted beyond $t=5\times 10^5$. (Computing the same histograms in a disjoint time interval results in a similar distribution.) Notably, the normalized persistence for $\PH_2$ has a substantially longer tail than that of $\PH_1$ for the three-dimensional Eden model, and both $\PH_3$ and $\PH_2$ have long tails for the four-dimensional model. For the latter case, it is somewhat surprising that the distribution for $\PH_2$ is more similar to that for $\PH_3$ than that for $\PH_1$.

\subsection{Local geometry of holes} \label{S:Compu_areasandshapes}
 
 In this section, we explore random variables defined in terms of the geometry associated to the $d-1$-dimensional homology of the EGM in $d$ dimensions.  These variables are: the areas, shapes, and evolution of the top-dimensional holes.

\subsubsection{Areas and shapes}
Betti numbers allow us to count the number of holes of each dimension. Alas, they tell us nothing about the geometry associated with these holes. As mentioned before, it is not easy to measure the geometric properties associated with homology in dimensions $1$ through $d-2$ as one cannot uniquely define representative cycles. Fortunately, for top-dimensional holes, we can use Alexander duality to associate generators of $H_{d-1}\paren{A\paren{t}}$ with components of the complement of $A\paren{t}$.  In what follows, we present statistics concerning the area and shapes of top-dimensional holes in simulations of the EGM, largely focusing on dimension 2.

\begin{figure}
    \centering
    \includegraphics[scale=.6]{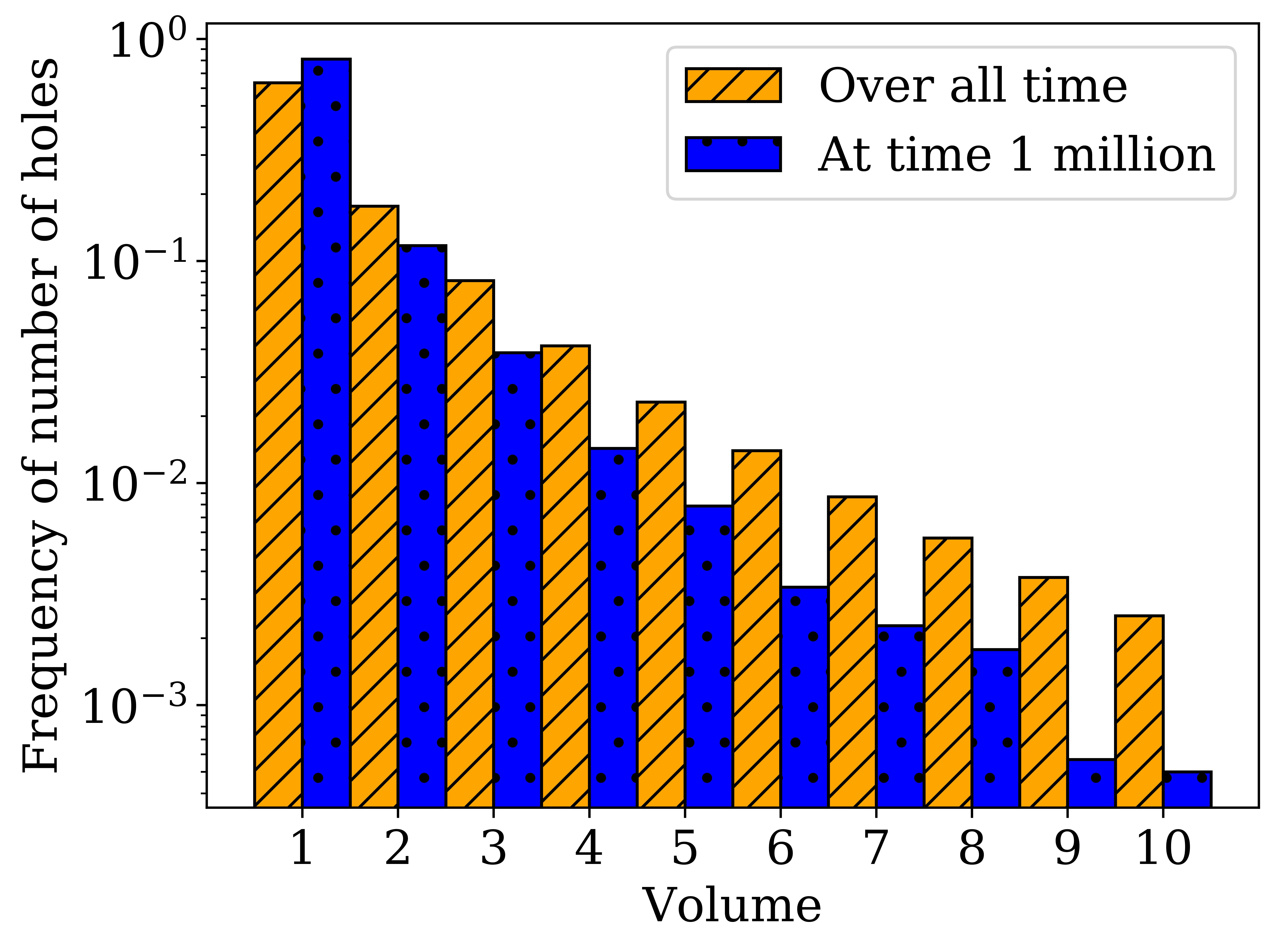}
    \caption{Histogram showing the (mean) frequency of the volumes of the holes at time 1 million and of all holes created at any time (measured at the birth of the corresponding persistent homology interval). The latter includes data both from holes created from the outer perimeter and those resulting from the division of an existing hole. Data was taken from 10 simulations of the 2D EGM up to time 1 million. Here we show only the frequency of holes up to area 10. More detailed statistical information is contained in Table \ref{table:areas}.}
    \label{fig:areas}
    \end{figure}
    \begin{table}
\caption{Table showing numerical data corresponding to Figure \ref{fig:areas}.}
\label{table:areas}
    \centering
\begin{tabular} {  c|c|c|c|c|c | c}
Area & 1 & 2 & 3 & 4 & 5 & $\geq 6$\\
 \hline
 \hline
 \multicolumn{7}{|c|}{At time $10^6$} \\
 \hline
\hline
 Mean & 0.812 & 0.117 &0.039 &0.014& 0.008& 0.010\\
 \hline
 SD & 0.011 & 0.008 & 0.006 & 0.005& 0.002& $ \leq 0.002$ \\
 \hline
 \hline
\multicolumn{7}{|c|}{Over all time} \\
\hline
\hline
 Mean & 0.636 & 0.177& 0.082& 0.042& 0.023& 0.042 \\
 \hline
SD &  \multicolumn{6}{c}{all values $ \leq 0.001$}\\
 \hline

\end{tabular}
\end{table}

Figure \ref{fig:areas} shows a histogram of the areas of holes in the two-dimensional EGM with respect to two distributions: the areas of the holes at the time they were born, taken over all time (in orange with diagonal lines), and the areas of the holes present at time 1 million (in blue with spots). Unsurprisingly, the areas of holes at the time they were born are slightly larger than the snapshot at time one million. The frequency of holes of a given area appears to decrease somewhat sub-exponentially as a function of area, although the relationship is less clear for the smaller sample. Table \ref{table:areas} shows the corresponding numerical data. Data was taken from 10 simulations of the two-dimensional EGM consisting of 1 million tiles.

Before studying the shapes of the holes in the two-dimensional EGM, we need to establish some conventions about how to count polyominoes of a given area. Two shapes are instances of the same \emph{fixed} polyomino if they are congruent after translation and of the same \emph{free} polyomino if they are congruent after rotations, reflections, and translations. For example, there are 19 fixed polyominoes of area four, and 5 free polyominoes of area four. All free polyominoes of areas three and four are depicted in Table \ref{table:shapes}, with the corresponding number of fixed polyominoes in the first row.

\begin{table}
    \caption{The proportion of holes of areas three and four which take the shape of each free polyomino. The statistical results summarized in this table were obtained from 10 different simulations of the EGM in two dimensions up to time one million.}
    \label{table:shapes}
    \centering
\begin{tabular} {  c|c|c|c|c|c || c|c }
 & 

\begin{tikzpicture}[scale=.45] 
\foreach \x/\y in { 0 / 0, 0 / 1, 1 / 0, 1 / 1  } { 
\path [draw=gray!66!black, fill=gray!66!black] (\x-0.45, + \y-0.45)
-- ++(0,.9)
-- ++(.9,0)
-- ++(0,-.9)
--cycle;
}
\end{tikzpicture} &

\begin{tikzpicture}[scale=.45] 
\foreach \x/\y in { 0 / 0, 0 / 1, 0 / 2, 1/1 } { 
\path [draw=gray!66!black, fill=gray!66!black] (\x-0.45,\y-0.45)
-- ++(0,.9)
-- ++(.9,0)
-- ++(0,-.9)
--cycle;
}
\end{tikzpicture}  &

\begin{tikzpicture}[scale=.45] 
\foreach \x/\y in {  0 / 2, 0 / 1, 1 / 0, 1/1 } { 
\path [draw=gray!66!black, fill=gray!66!black] (\x-0.45, \y-0.45)
-- ++(0,.9)
-- ++(.9,0)
-- ++(0,-.9)
--cycle;
}
\end{tikzpicture} &

\begin{tikzpicture}[scale=.45] 
\foreach \x/\y in {1 / 0, 1 / 1, 1 / 2, 0/2  } { 
\path [draw=gray!66!black, fill=gray!66!black] (\x-0.45, \y-0.45)
-- ++(0,.9)
-- ++(.9,0)
-- ++(0,-.9)
--cycle;
}
\end{tikzpicture}  &

\begin{tikzpicture}[scale=.45] 
\foreach \x/\y in {0 / 0, 0 / 1, 0 / 2,   0/3 } { 
\path [draw=gray!66!black, fill=gray!66!black] (\x-0.45,\y-0.45)
-- ++(0,.9)
-- ++(.9,0)
-- ++(0,-.9)
--cycle;
}
\end{tikzpicture}  & 

\begin{tikzpicture}[scale=.45] 
\foreach \x/\y in { 0 / 0, 0 / 1, 1 / 0  } { 
\path [draw=gray!66!black, fill=gray!66!black] (\x-0.45, + \y-0.45)
-- ++(0,.9)
-- ++(.9,0)
-- ++(0,-.9)
--cycle;
}
\end{tikzpicture} &

\begin{tikzpicture}[scale=.45] 
\foreach \x/\y in { 0 / 0, 0 / 1, 0 / 2 } { 
\path [draw=gray!66!black, fill=gray!66!black] (\x-0.45, + \y-0.45)
-- ++(0,.9)
-- ++(.9,0)
-- ++(0,-.9)
--cycle;
}
\end{tikzpicture}

\\
Number of fixed types (R) & 1 & 4 & 4 & 8 & 2 & 4 & 2 \\
 \hline
 \hline
 
 \multicolumn{8}{|c|}{Over all time:} \\
 \hline
Mean frequency (F) & 0.131 & 0.248 & 0.227 & 0.338 & 0.055 &  0.736 & 0.263   \\
 \hline
F /R  & 0.131 & 0.062  & 0.056 &  0.042  &  0.027 &  0.184 & 0.131 \\
\hline
Sample SD & 0.004 & 0.004 & 0.007 & 0.007 & 0.003 &0.003 & 0.003 \\
\hline
 \hline
 
 \multicolumn{8}{|c|}{At time 1 million:} \\
 \hline
Mean frequency (F) & 0.066 & 0.278 & 0.249 & 0.341 & 0.064 & 0.763 & 0.236 \\
 \hline
F /R  & 0.066 & 0.069  & 0.062 &  0.042  &  0.032 &  0.190 & 0.118 \\
\hline
Sample SD & 0.072 & 0.079 & 0.106 & 0.078 & 0.031 & 0.082 & 0.082\\
\hline

\end{tabular}
\end{table}

In Table \ref{table:shapes}, we show the proportion of holes in the two-dimensional EGM that take the shape of each free polyomino of area three or four. The data was taken from 10 different runs of the Eden model through time 1 million. In this sample, we observed an average of 147,306.5 holes of all sizes with a sample standard deviation of 152.4, of which an average of 6,113.5 holes had area four and 12,026.6 had area three at the moment of their birth, with sample standard deviations of 97.1 and 54.8 respectively. At time 1 million, we observed an average of 1,231.5 holes of all sizes with a sample standard deviation of 34.2, of which an average of 17.7 holes had area four and 47.6 holes had area three, with sample standard deviations of 6.2 and 7.4 respectively (in Table \ref{table:areas} these statistics are presented as frequencies). Note that the most common birth shape of area four is the roundest (the square) when controlling for multiplicity, and the least common is the longest. However, at time 1 million, the T-shape just edges out the square. The difference between these frequencies is likely related to properties of the ``reverse process'' we describe in the next section.

We have also recorded the extremal volumes of holes in dimension 2 through 5. In ten simulations of the two dimensional Eden model up to time 1 million, the largest hole created had an area of 48.7 and a standard deviation of 10.564. One of these largest holes is depicted in Figure \ref{F:max_hole}, together with the largest hole created in a simulation of the 3D EGM up to time 1 million. In Table \ref{table:biggest}, we record the volume of the largest top-dimensional hole created in a single simulation through time 1.5 million in each dimension from 2 to 5. 

\begin{figure}
\centering
\subfloat{
\centering
    \begin{tikzpicture}[scale=.5] 
\foreach \x/\y in {4 / 10, 3 / 10, 4 / 9, 3 / 9, 5 / 9, 4 / 8, 2 / 9, 3 / 8, 6 / 9, 4 / 7, 1 / 9, 2 / 8, 3 / 7, 5 / 7, 4 / 6, 1 / 8, 2 / 7, 3 / 6, 5 / 6, 4 / 5, 1 / 7, 3 / 5, 6 / 6, 5 / 5, 4 / 4, 3 / 4, 6 / 5, 5 / 4, 4 / 3, 3 / 3, 7 / 5, 6 / 4, 5 / 3, 4 / 2, 2 / 3, 8 / 5, 7 / 4, 4 / 1, 1 / 3, 2 / 2, 9 / 5, 5 / 1, 0 / 3, 1 / 4, 1 / 2, 0 / 4, 1 / 1, 1 / 0} { 
\path [draw=gray!66!black, fill=gray!66!black] ( 8.5+\x-0.45, 3.5+\y-0.45)
-- ++(0,.9)
-- ++(.9,0)
-- ++(0,-.9)
--cycle;
} \end{tikzpicture}
}
\qquad
\subfloat{
\centering
    \includegraphics[width=5cm]{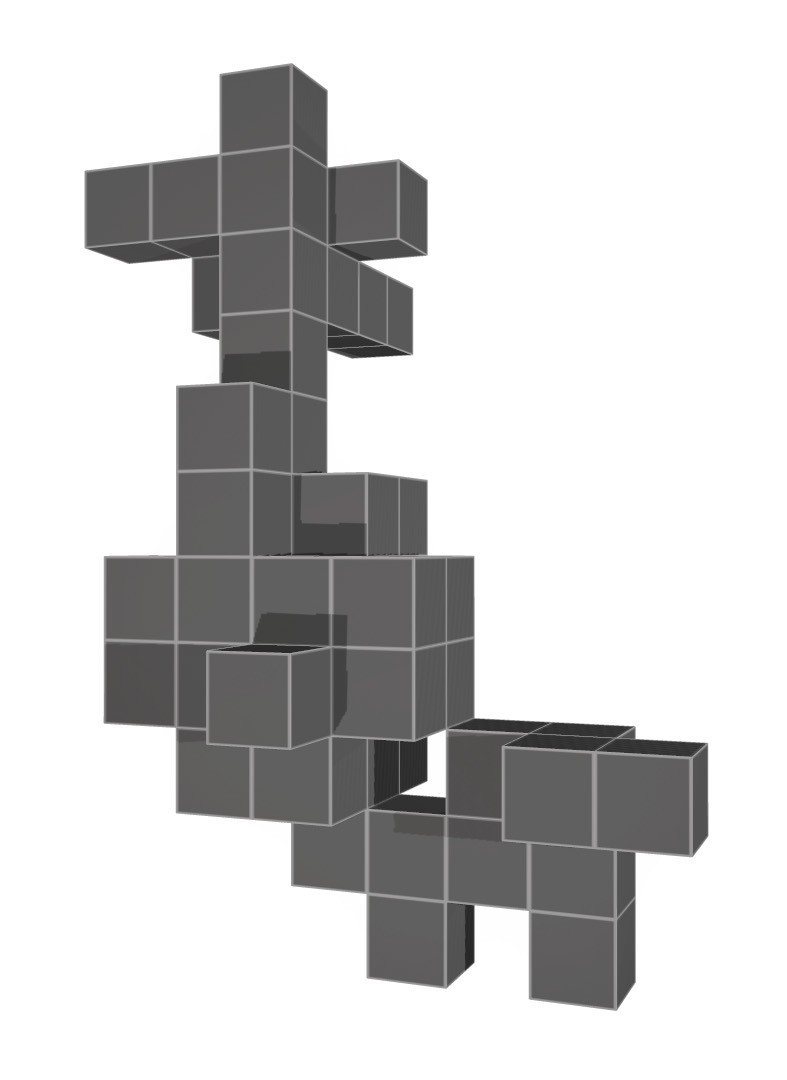}
}
\caption{A ``cast'' of the largest top-dimensional holes. The polyomino on the left has area 48 and the polycube on the right has volume 49. They were obtained from a simulation of the EGM up to time one million in two and three dimensions respectively.  In the two-dimensional case, this simulation was chosen randomly out of 10 runs of the experiment. The polycube is available for interactive exploration at \url{https://skfb.ly/6SnzN}.}
\label{F:max_hole}
\end{figure}

\begin{table}
\caption{Volumes of the largest top-dimensional holes created in an EGM simulation up to time 1.5 million in each of dimensions 2 through 5.}
\label{table:biggest}
    \centering
\begin{tabular} {  c|c|c|c|c}

 & 2D & 3D & 4D & 5D  \\
 \hline
\hline
Largest volume at time 1.5 million  &10 &27&32&19\\
 \hline
Largest volume over all time & 48 & 64 & 55 & 30 \\
 \hline

\end{tabular}
\end{table}
\subsubsection{Splitting trees}
After a hole forms from the outer perimeter, it may split a number of times before disappearing. This behavior is captured by a splitting tree~\cite{schweinhart2015statistical}, which tracks the times that division occurs and the resulting polyominoes. Note that these splitting times correspond to births of intervals in the $d-1$-dimensional persistent homology. In Figure \ref{F:spliiting_tree}, we show the splitting tree of the two-dimensional hole depicted in Figure \ref{F:max_hole}. We do not perform an in depth analysis of this data, but we propose that the ``reverse process'' that produces this splitting tree is of interest. More precisely, $P(t)$ evolves by the reverse process with initial condition $P(0)$ if $P(t)$ is determined from $P(t-1)$ by uniformly removing one of the tiles adjacent to the perimeter.   This is equivalent to applying the Eden growth process to the complement of $P(0)$.

\begin{figure}
\begin{center}
\includegraphics[width=\textwidth]{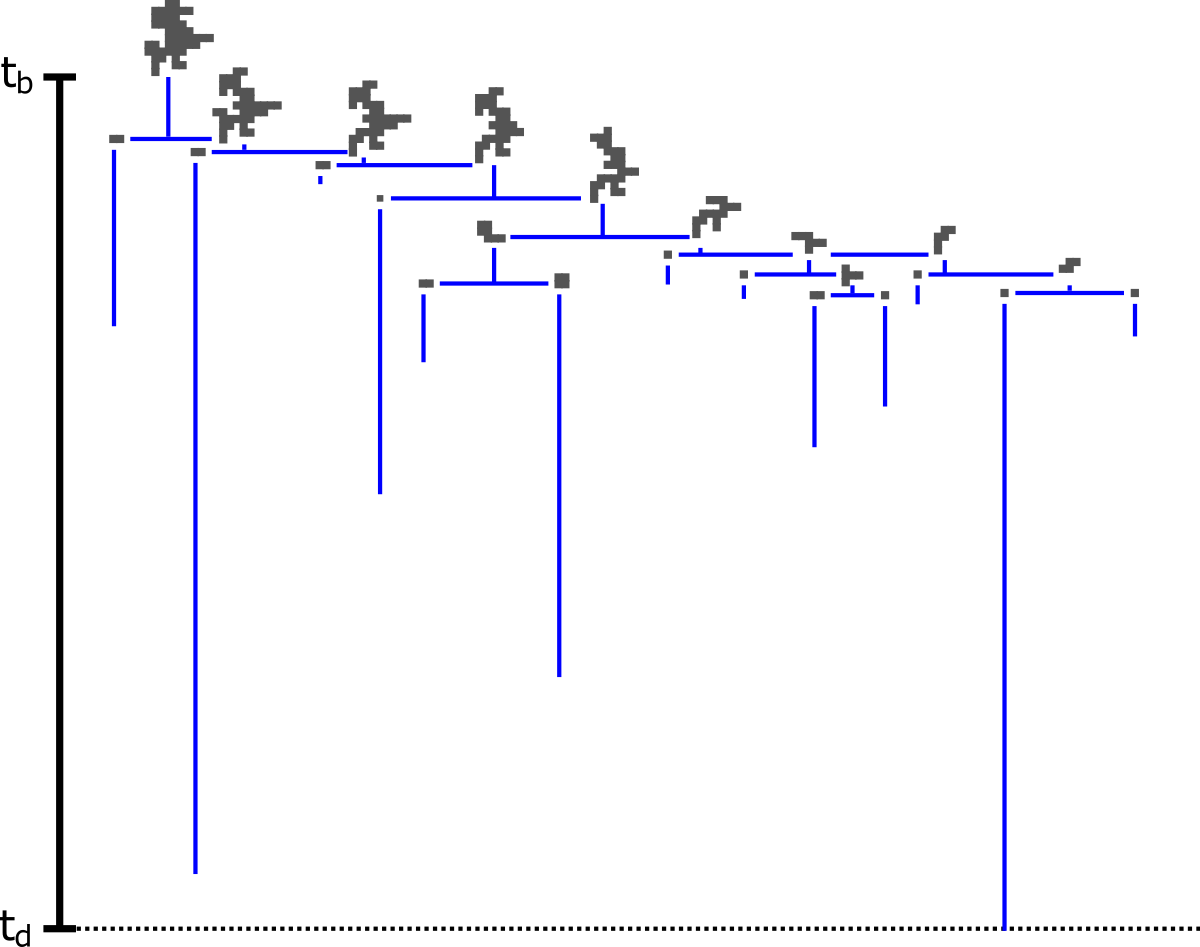}
\end{center}
\caption{Splitting tree of the two-dimensional hole depicted in Figure \ref{F:max_hole}. Its birth time is $t_b = 586,942$ and its death time is $t_d = 618,185$.}
\label{F:spliiting_tree}
\end{figure}
\section*{Acknowledgments}
We thank Ulrich Bauer, Eric Babson, Michael Damron, Christopher Hoffman, Matthew Kahle, Sayan Mukherjee, and Kavita Ramanan for interesting conversations about the Eden model.  We are also grateful for helpful comments from Anna Krymova and two anonymous referees which have greatly improved the paper.
\bibliographystyle{plain}
\bibliography{example}

\end{document}